\newcommand{\abs}[1]{\left\vert#1\right\vert}
\newcommand{\vertex}{\node[vertex]}
\tikzstyle{vertex}=[circle, draw, inner sep=0pt, minimum size=6pt]
\newtheorem{theorem}{Theorem}
\newtheorem{lemma}{Lemma}
\newtheorem{corollary}{Corollary}
\newtheorem{proposition}{Proposition}
\newtheorem{observation}{Observation}
\newcommand{\spider}{\hskip1pt \textrm{sp} \hskip1pt}
\begin{document}
\title{Graphs which satisfy a Vizing-like bound for power domination of Cartesian products}
\author{
Sarah E. Anderson$^{a}$
\and
Kirsti Kuenzel$^{b}$
\and
Houston Schuerger$^{b}$
}

\date{\today}

\maketitle

\begin{center}
$^a$ Department of Mathematics, University of St. Thomas, St. Paul, MN 55105\\
$^b$ Department of Mathematics, Trinity College, Hartford, CT 06106\\

\end{center}
\vskip15mm
\begin{abstract} 
Power domination is a two-step observation process that is used to monitor power networks and can be viewed as a combination of domination and zero forcing. Given a graph $G$, a subset $S\subseteq V(G)$  that can observe all vertices of $G$ using this process is known as a power dominating set of $G$, and the power domination number of $G$, $\gamma_P(G)$, is the minimum number of vertices in a power dominating set. We introduce a new partition on the vertices of a graph to provide a lower bound for the power domination number. We also consider the power domination number of the Cartesian product of two graphs, $G \Box H$, and show certain graphs satisfy a Vizing-like bound with regards to the power domination number. In particular, we prove that for any two trees $T_1$ and $T_2$, $\gamma_P(T_1)\gamma_P(T_2) \leq \gamma_P(T_1 \Box T_2)$.
\end{abstract}

{\small \textbf{Keywords:} Power domination, Cartesian products} \\
\indent {\small \textbf{AMS subject classification:} }05C69, 05C70
\section{Introduction} \label{sec:intro}
Electrical power networks may be monitored through a  two-step process called power domination. Phase measurement units (PMUs) are used to observe the information; however, since PMUs are expensive, one would like to use the smallest number of PMUs possible to observe the power network. Haynes et al. first translated this problem to a graph in \cite{electricgrids}. Consider the graph $G = (V(G), E(G))$, where the vertices represent the electrical nodes and the edges represent transmission lines between two electrical nodes. Due to this context, in this paper, we will consider simple, finite graphs. Let $S \subseteq V(G)$ be a subset of vertices that represents where the PMUs are placed, which will observe other vertices in the graph using the following rules. The initialization step (or domination step) is all vertices in $S$ as well as all neighbors of vertices in $S$ are observed. The propagation step (or zero forcing step) is every  vertex which is the only unobserved neighbor of some observed vertex becomes observed. The initialization step can only occur once while the propagation step can occur multiple times until no other vertices can be observed. If the initial set $S$ eventually observes all the other vertices in the graph, it is called a {\it power dominating set}. The power domination number of $G$, $\gamma_P(G)$, is the cardinality of the smallest power dominating set of $G$. Recall that a dominating set of a graph $G$ is a set $S \subseteq V(G)$ such that every vertex in $V(G) - S$ is adjacent to a vertex in $S$. The \emph{domination number} of $G$, denoted $\gamma(G)$, is the minimum cardinality among all dominating sets of $G$. Due to the initialization step, any dominating set is a power dominating set. Thus, $\gamma_P(G) \leq \gamma(G)$. In addition, repeatedly applying the propagation step without first applying the initialization step is a graph searching process known as zero forcing. In particular, a set $S \subseteq V(G)$ is a {\it zero forcing set} if all vertices in $G$ are observed using only the propagation step. Since any zero forcing set is a power dominating set, $\gamma_P(G) \leq Z(G)$, where $Z(G)$ is the zero forcing number of $G$, or the size of the smallest zero forcing set of $G$. Zero forcing was first introduced in \cite{AIM} and has been extensively studied, see \cite{ARSS-2020,BBFHHSDH-2010,BBFHHSDH-2013,KBDF,DH-2020,GR-2018}. 

While upper bounds on the power domination number have been extensively studied, not many lower bounds on the power domination number have been given. In \cite{KBDF}, Benson et al. provide the following lower bound, which relies on knowing the zero forcing number of the graph. Note that $\Delta(G)$ denotes the maximum degree of a vertex of $G$.

\begin{theorem} \cite{KBDF}
Let $G$ be a graph that has an edge. Then $\left \lceil \frac{Z(G)}{\Delta(G)} \right \rceil \leq \gamma_P(G)$, and this bound is tight.
\end{theorem}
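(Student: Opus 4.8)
The plan is to establish the equivalent inequality $Z(G) \le \Delta(G)\,\gamma_P(G)$; once this is in hand, the fact that $\gamma_P(G)$ is an integer that is at least $Z(G)/\Delta(G)$ immediately gives $\left\lceil Z(G)/\Delta(G)\right\rceil \le \gamma_P(G)$. The starting observation is that the closed neighborhood of any power dominating set is a zero forcing set. Indeed, if $S$ is a power dominating set, then after the initialization step the observed set is exactly $N[S] = S \cup N(S)$, and every subsequent step of the process is precisely the propagation (zero forcing) rule. Since $S$ observes all of $V(G)$, running zero forcing from the starting set $N[S]$ also colors all of $V(G)$, so $Z(G) \le |N[S]|$. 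This already yields a bound, but $|N[S]|$ can be as large as $(\Delta(G)+1)\gamma_P(G)$ (for instance on a star), so a refinement that saves one vertex per element of $S$ is needed.

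Let $S = \{v_1,\dots,v_k\}$ be a minimum power dominating set, so $k = \gamma_P(G)$. For each non-isolated $v_i$ I would choose one neighbor $u_i$ and set $B_i = N[v_i]\setminus\{u_i\}$, and for any isolated $v_i$ set $B_i = \{v_i\}$. Then $|B_i| \le \Delta(G)$ in every case, so $B = \bigcup_{i=1}^{k} B_i$ satisfies $|B| \le \Delta(G)\,k = \Delta(G)\,\gamma_P(G)$. It remains to show that $B$ is a zero forcing set. The key point, and the step I expect to be the main obstacle, is to force the deleted vertices $u_i$ back \emph{simultaneously} without the usual interference among the forces; the construction is designed precisely to sidestep this. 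For each $i$, every neighbor of $v_i$ other than $u_i$ lies in $B_i \subseteq B$ (in particular, any $u_j$ with $j\ne i$ that happens to be adjacent to $v_i$ is already colored), so $v_i$ has at most one uncolored neighbor, namely $u_i$. Hence each $v_i$ with an uncolored neighbor forces $u_i$ in the very first round with no conflict, and after this round all of $N[S] = \bigcup_i N[v_i]$ is colored. Since $N[S]$ is a zero forcing set, the process then colors all of $V(G)$, giving $Z(G) \le |B| \le \Delta(G)\,\gamma_P(G)$.

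For tightness, the star $K_{1,n}$ is a clean witness: it has $\Delta = n$, $\gamma_P = 1$, and $Z = n-1$, so $\left\lceil Z/\Delta\right\rceil = \left\lceil (n-1)/n\right\rceil = 1 = \gamma_P$. To exhibit equality over a range of parameters I would take $t$ disjoint copies of $K_{1,n}$ with $t \le n$, where $\gamma_P = t$, $Z = t(n-1)$, and $\Delta = n$ give $\left\lceil t(n-1)/n\right\rceil = t = \gamma_P$, verified by the same direct analysis.
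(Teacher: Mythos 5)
Your proof is correct. Note that the paper does not prove this statement at all: it is quoted from \cite{KBDF} as a known result, so there is no in-paper argument to compare against. Your argument is the standard (and essentially the original) one: delete one chosen neighbor $u_i$ from each closed neighborhood $N[v_i]$ of a minimum power dominating set $S=\{v_1,\dots,v_k\}$, observe that each $v_i$ then has at most one uncolored neighbor and so forces $u_i$ (or $u_i$ is already colored), and conclude via monotonicity of the zero forcing closure and the fact that $N[S]$ is a zero forcing set; integrality of $\gamma_P$ then yields the ceiling. The tightness examples also check out, since $Z(K_{1,n})=n-1$ for $n\ge 2$ and both $Z$ and $\gamma_P$ are additive over components, so the disjoint union of $t\le n$ stars gives $\left\lceil t(n-1)/n\right\rceil = t$.
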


We construct a lower bound for $\gamma_P(G)$ based on a partitioning of the vertices of $G$ into sets such that their complements are not power dominating sets of $G$. This is very similar to the study of zero blocking sets, which were defined in \cite{KSTV-2020} by Karst et al., where a zero blocking set is the complement of a set which is not a zero forcing set. Similarly, Fetcie et al. studied failed zero forcing numbers in a graph \cite{FJS15} and Glasser et al. studied the failed power domination number of a graph \cite{GJLR-2020}. The main objective of this paper is to provide a new lower bound on the power domination number of the Cartesian product of two graphs. Recall that for graphs $G$ and $H$, the Cartesian product $G\Box H$ has vertex set
$V(G \Box H) = \{(g,h)\,:\, g\in V(G), h \in V(H)\}$.  Two vertices $(g_1,h_1)$ and $(g_2,h_2)$ are adjacent in $G\Box H$ if either $g_1=g_2$ and $h_1h_2\in E(H)$ or $h_1=h_2$ and $g_1g_2 \in E(G)$. Koh and Soh provide the following bound on the power domination number of the Cartesian product of two graphs in \cite{pdprod}. 

\begin{lemma}\cite{pdprod}\label{thm:Cartlower} For any connected graphs $G$ and $H$, $\gamma_P(H) \leq \gamma_P(G\Box H)$.
\end{lemma}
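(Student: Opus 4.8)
The plan is to take a minimum power dominating set $S$ of $G \Box H$ and project it onto $H$. Let $\pi \colon V(G \Box H) \to V(H)$ be the projection $\pi(g,h) = h$, and set $S' = \pi(S) = \{h : (g,h) \in S \text{ for some } g \in V(G)\}$. Then $|S'| \le |S| = \gamma_P(G \Box H)$, so it suffices to prove that $S'$ is a power dominating set of $H$.

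Call a vertex $h \in V(H)$ \emph{covered} at a given stage of the observation process on $G \Box H$ if some vertex $(g,h)$ is observed at that stage. The key idea is to run the observation process that $S$ performs on $G \Box H$ and show that the evolving set of covered vertices in $H$ is realized by a valid observation process started from $S'$. First I would check the domination step: since the neighbors of $(g,h)$ are the vertices $(g',h)$ with $g'g \in E(G)$ together with the vertices $(g,h')$ with $h'h \in E(H)$, after the single domination step on $G \Box H$ the set of covered vertices equals $S' \cup N_H(S')$, which is precisely the observed set produced by the domination step from $S'$ in $H$.

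Then I would analyze a single propagation force $(g_1,h_1) \to (g_2,h_2)$ in $G \Box H$, splitting into the two ways vertices can be adjacent in the product. If $h_1 = h_2$ (a force along a $G$-edge), the covered vertices of $H$ are unchanged, since $h_1$ was already covered. If $g_1 = g_2 = g$ and $h_1 h_2 \in E(H)$ (a force along an $H$-edge), then $h_2$ becomes covered; and because $(g,h_2)$ was the unique unobserved neighbor of $(g,h_1)$, every $H$-neighbor $h'$ of $h_1$ other than $h_2$ has $(g,h')$ observed and hence $h'$ covered. Thus, relative to the current set of covered vertices, $h_1$ is observed and $h_2$ is its only unobserved neighbor in $H$, so $h_1 \to h_2$ is a legitimate force in $H$ (and if $h_2$ was already covered, no force is needed). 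Processing the forces of $G \Box H$ in order therefore yields a valid sequence of forces in $H$ whose covered set grows monotonically.

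Finally, since $S$ observes all of $V(G \Box H)$, every $h \in V(H)$ is eventually covered, so the induced process in $H$ observes all of $V(H)$; invoking the order-independence of the zero forcing closure lets me conclude that $S'$ is a power dominating set of $H$, whence $\gamma_P(H) \le |S'| \le \gamma_P(G \Box H)$. The main obstacle is the bookkeeping in the propagation step: one must verify that the "unique unobserved neighbor" condition in the product transfers to the corresponding condition in $H$, while handling the $G$-direction forces, which advance the product process without contributing new coverage, in a way that does not break the correspondence.
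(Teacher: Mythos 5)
Your proposal is correct. Note that the paper itself gives no proof of this lemma --- it is quoted from the Koh--Soh reference \cite{pdprod} --- so there is no internal argument to compare against; your projection argument is the standard route to this result. The two things that needed checking both check out: the projected domination step gives exactly $S' \cup N_H(S')$, and for an $H$-direction force $(g,h_1)\to(g,h_2)$ the uniqueness of the unobserved neighbor in the product forces every $(g,h')$ with $h'\in N_H(h_1)\setminus\{h_2\}$ to be observed, so $h_1\to h_2$ is legitimate relative to the covered set. Two minor remarks: the appeal to order-independence of the closure at the end is unnecessary, since your simulation maintains the exact equality between the covered set and the observed set of the constructed process in $H$ (each product force either changes nothing or corresponds to a single valid force in $H$); and your argument never uses connectivity of $G$ or $H$, so it in fact proves the inequality for arbitrary graphs.
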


It was also claimed in \cite{pdprod} that $\gamma_P(G\Box T) \ge \gamma_P(G)\gamma_P(T)$ for any graph $G$ and any tree $T$. In \cite{pdcubic} the authors pointed out a flaw in the proof provided in \cite{pdprod}. We are able to resolve the problem when both $G$ and $T$ are trees; that is, we show the following. 

\begin{theorem} For any trees $T_1$ and $T_2$, $\gamma_P(T_1)\gamma_P(T_2) \leq \gamma_P(T_1 \Box T_2)$.
\label{cartlowerboundtree}
\end{theorem}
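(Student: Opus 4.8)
The plan is to combine the partition lower bound introduced earlier with the monotonicity of power domination, reducing the theorem to a structural statement about partitions. Recall that power domination is monotone: every superset of a power dominating set is again power dominating, so every subset of a non--power dominating set is again non--power dominating. Hence, if $V(G)$ can be partitioned into $k$ parts $P_1,\dots,P_k$ such that each complement $V(G)\setminus P_i$ fails to be a power dominating set, then every power dominating set must meet each $P_i$, and therefore $\gamma_P(G)\ge k$. Writing $a=\gamma_P(T_1)$ and $b=\gamma_P(T_2)$, I would prove the theorem by exhibiting a partition of $V(T_1\Box T_2)$ into $ab$ parts, each of whose complement is not a power dominating set; the bound $\gamma_P(T_1\Box T_2)\ge ab$ then follows immediately.

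To build such a partition I would pass through forts. For $P\subseteq V(G)$ write $\mathrm{int}(P)=\{v : N[v]\subseteq P\}$. Since a set $S$ is a power dominating set exactly when $N[S]$ is a zero forcing set, and a set is a zero forcing set exactly when it meets every fort, the complement $V(G)\setminus P$ fails to be a power dominating set if and only if some fort is contained in $\mathrm{int}(P)$. The crucial product fact is that forts multiply: if $F_1$ is a fort of $G$ and $F_2$ is a fort of $H$, then $F_1\times F_2$ is a fort of $G\Box H$, which follows by checking that a vertex outside $F_1\times F_2$ has $0$ or at least $2$ neighbors in it but never exactly one, invoking the fort condition in the relevant coordinate. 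Consequently, if $A_1,\dots,A_a$ partition $V(T_1)$ with a fort $F_i\subseteq \mathrm{int}(A_i)$ for each $i$, and $B_1,\dots,B_b$ partition $V(T_2)$ with a fort $F'_j\subseteq\mathrm{int}(B_j)$ for each $j$, then the $ab$ sets $A_i\times B_j$ partition $V(T_1\Box T_2)$, and $F_i\times F'_j$ is a fort lying in $\mathrm{int}(A_i\times B_j)$, since $N[(g,h)]\subseteq A_i\times B_j$ whenever $N[g]\subseteq A_i$ and $N[h]\subseteq B_j$. Thus each complement $V(T_1\Box T_2)\setminus(A_i\times B_j)$ is not a power dominating set, as required.

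The main obstacle is therefore the single-factor statement: for any tree $T$, the set $V(T)$ admits a partition into exactly $\gamma_P(T)$ parts, each containing a fort in its interior. The inequality ``at most $\gamma_P(T)$ parts'' is just the partition lower bound, so the real content is achieving equality, and this is precisely where the tree hypothesis enters (the bound need not be tight for general graphs). I would prove this by induction on $|V(T)|$, peeling off structure near the leaves: at a branch vertex $w$ closest to a deepest leaf, the pendant structure hanging below $w$ supplies an explicit fort (for instance two leaves sharing the support $w$, or the small configuration that the propagation analysis of an optimal power dominating set forces there), which I would take as the interior witness of one part. Removing the associated subtree yields a smaller tree whose power domination number drops by exactly one, and the partition produced inductively on the remainder, together with this new part, gives the desired $\gamma_P(T)$ parts. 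The delicate points are ensuring that the removed part's interior genuinely contains the chosen fort and that the power domination number decreases by precisely one at each reduction; it is exactly this bookkeeping that relies on the tree structure and does not extend to arbitrary graphs.
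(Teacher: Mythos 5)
Your reduction and your product step are sound, and the product step is in fact a clean alternative to the paper's argument. The paper proves the same inequality (its Theorem on $\ell_G\ell_H\le\gamma_P(G\Box H)$) by following the observation process in $G\Box H$ timestep by timestep and deriving a contradiction; your fort formulation---$V(G)\setminus P$ fails to power dominate iff some fort lies in $\mathrm{int}(P)$, a product of forts $F_1\times F_2$ is a fort of $G\Box H$, and $\mathrm{int}(A_i)\times\mathrm{int}(B_j)\subseteq\mathrm{int}(A_i\times B_j)$---packages that propagation analysis into a static condition and is correct as stated.

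The genuine gap is the single-factor statement, which you leave as a sketch: that every tree $T$ admits a partition into exactly $\gamma_P(T)$ parts, each containing a fort in its interior. This is equivalent to the paper's Theorem~\ref{thm:treepart} combined with $\gamma_P(T)=\spider(T)$, and it is where essentially all of the work lies (the paper's proof of it runs several pages). Your proposed induction on $|V(T)|$ fails at exactly the two points you wave at. First, removing the pendant structure below a branch vertex nearest a deepest leaf need not drop $\gamma_P(T)=\spider(T)$ by exactly one; the paper has to treat the case $\spider(T')=\spider(T)$ separately. Second, and more fundamentally, even when the count is right you cannot simply adjoin the new part to an inductively obtained partition of the smaller tree $T'$: the part $\theta_1$ of $T'$ containing the attachment vertex $r$ carries a fort-in-interior certificate only with respect to $T'$, and reattaching the removed subtree gives $r$ a new neighbor. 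If $r$ lies in that fort, the new pendant neighbor has exactly one neighbor in it, destroying the fort property, and $r$ drops out of $\mathrm{int}(\theta_1)$; so the inductive certificate does not survive the gluing. This is precisely why the paper does not induct with the bare invariant ``each part's complement fails to power dominate,'' but with the strictly stronger Condition~(1)---each part contains two leaves of $T$ joined by a path on which any vertex power dominated from outside the part has both path-neighbors not power dominated---an invariant designed to be robust under these surgeries, and why it needs the long case analysis on strong support vertices and on $\deg_T(s)\in\{2,3\}$ versus $\deg_T(s)>3$. As written, your induction breaks at the gluing step; supplying the stronger invariant and the attendant case analysis is essentially the entire content of the paper's proof, not bookkeeping.
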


More generally, we provide a new lower bound for $\gamma_P(G \Box H)$ based on both factors. In fact, note that for any graph invariant $\psi$, the inequality $\psi(G\Box H) \ge \psi(G)\psi(H)$ is referred to as a Vizing-like bound after Vizing's infamous conjecture \cite{Vizing} from 1968 that states $\gamma(G\Box H) \ge \gamma(G)\gamma(H)$. We are particularly interested in knowing whether $\gamma_P(G\Box H) \ge\gamma_P(G)\gamma_P(H)$ for any pair of graphs $G$ and $H$.  

This paper is organized as follows. We provide useful definitions and terminology used throughout the paper in Section~\ref{sec:defn}. In Section \ref{sec:trees}, we investigate power domination in trees. In Section \ref{sec:prod}, we construct new lower bounds for $\gamma_P(G)$ and $\gamma_P(G \Box H)$ for graphs $G$ and $H$. In addition, we show a Vizing-like bound for the Cartesian product of two trees holds for the power domination number and give an equivalent lower bound for $Z(G\Box H)$. In Section 4, we examine the interaction between power domination and cut-sets. 

\subsection{Definitions and Terminology}\label{sec:defn}
We consider only simple, finite graphs. Given a graph $G = (V(G), E(G))$ and a vertex $v \in V(G)$, the {\it open neighborhood of $v$}, denoted $N_G(v)$, is the set of all vertices adjacent to $v$ and the {\it closed neighborhood of $v$} is defined to be $N_G[v] = N_G(v) \cup \{v\}$. The {\it degree of $v$} is $\deg_G(v) = |N_G(v)|$. When the context is clear, we simply write $N(v), N[v]$, and $\deg(v)$. A vertex of degree $1$ is referred to as a {\it leaf}. The distance between two vertices $u$ and $v$ is denoted $d_G(u,v)$.  A set $S\subseteq V(G)$ is a power dominating set if all vertices of $G$ are eventually observed according to the following rules. 
\begin{itemize}
\item \textbf{Initialization Step (Domination Step):} All vertices in $S$ as well as all neighbors of vertices in $S$ are observed.
\item \textbf{Propagation Step (Zero Forcing Step):} Every vertex which is the only unobserved neighbor of some observed vertex becomes observed.
\end{itemize}
In this paper, we will consider applying the above observation rules to a set of vertices that is not necessarily a power dominating set of $G$. In particular, if $S\subseteq V(G)$ and we apply the domination step and then the zero forcing step repeatedly until no further vertices can be observed, we will say that $v \in V(G)$ is {\it power dominated by $S$} if $v$ is eventually observed in the above process, and we will say $v$ is {\it not power dominated by $S$} if it is not eventually observed.

\section{Trees}\label{sec:trees}
It is known that for some graphs $G$ and any spanning tree $T$ of $G$ that $Z(T) < Z(G)$. Similarly, for some graphs $G$ and any spanning tree $T$ of $G$, $\gamma_P(T)<\gamma_P(G)$. Consider the graph $G$ depicted in Figure~\ref{orderadj}. To see that $\gamma_P(G)=3$, consider $\Pi=\bigcup_{i=1}^3 \Pi_i$, with $\Pi_1=\{u_i\}_{i=1}^4$, $\Pi_2=\{v_i\}_{i=1}^4$, and $\Pi_3=\{w_i\}_{i=1}^6$. Note that $\Pi$ is a partition of the vertices of $G$ such that for each $i \in \{1,2,3\}$, $V(G)-\Pi_i$ is not a power dominating set of $G$.  Due to this, any power dominating set of $G$ must contain at least one vertex from each $\Pi_i$, and so $\gamma_P(G) \geq 3$.  On the other hand, $\{v_1,u_1,w_1\}$ is a power dominating set of $G$, and thus $\gamma_P(G)=3$.  However, each of the four spanning trees of $G$, given by deleting any one of the four edges $w_2w_3$, $w_3w_5$, $w_5w_6$, or $w_2w_6$, have spider cover number $2$, and thus for $T$ a spanning tree of $G$, $\gamma_P(T)=2$.

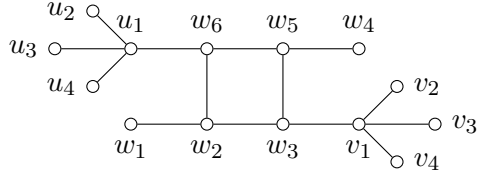
\begin{figure}[h]
\begin{center}
\begin{tikzpicture}[]
\tikzstyle{vertex}=[circle, draw, inner sep=0pt, minimum size=6pt]
\tikzset{vertexStyle/.append style={rectangle}}
	\vertex (1) at (1,1) [scale=.75, label=below:$w_1$] {};
	\vertex (2) at (2,1) [scale=.75, label=below:$w_2$] {};
	\vertex (3) at (3,1) [scale=.75, label=below:$w_3$] {};
	\vertex (4) at (4,1) [scale=.75, label=below:$v_1$] {};
	\vertex (5) at (5,1) [scale=.75, label=right:$v_3$] {};
	\vertex (6) at (4,2) [scale=.75, label=above:$w_4$] {};
	\vertex (7) at (3,2) [scale=.75, label=above:$w_5$] {};
	\vertex (8) at (2,2) [scale=.75, label=above:$w_6$] {};
	\vertex (9) at (1,2) [scale=.75, label=above:$u_1$] {};
	\vertex (10) at (0,2) [scale=.75, label=left:$u_3$] {};
	\vertex (11) at (4.5,0.5) [scale=.75, label=right:$v_4$] {};
	\vertex (12) at (4.5,1.5) [scale=.75, label=right:$v_2$] {};
	\vertex (13) at (0.5,1.5) [scale=.75, label=left:$u_4$] {};
	\vertex (14) at (0.5,2.5) [scale=.75, label=left:$u_2$] {};
	\path
	(1) edge (2)
	(2) edge (3)
	(3) edge (4)
	(4) edge (5)
	(6) edge (7)
	(7) edge (8)
	(8) edge (9)
	(9) edge (10)
	(11) edge (4)
	(12) edge (4)
	(13) edge (9)
	(14) edge (9)
	(7) edge (3)
	(8) edge (2);
\end{tikzpicture}
\end{center}
\caption{An example where no spanning tree has power domination number $\gamma_P(G)$}
\label{orderadj}
\end{figure}

Our original motivation for considering the power domination of spanning trees of a given graph was to perhaps lower bound $\gamma_P(G\Box H)$ with some value that involved $\min\{\gamma_P(T): \text{ $T$ is a spanning tree of $G$}\}$ or $\max\{\gamma_P(T): \text{$T$ is a spanning tree of $G$}\}$. Unfortunately, \[\gamma_P(G) -\min\{\gamma_P(T): \text{ $T$ is a spanning tree of $G$}\}\] could be very large. In addition, as witnessed by the example above, and the fact that $\gamma_P(P_3 \Box P_6)=1$ yet $P_3 \Box P_6$ has spanning trees with power domination number $2$, it follows that $\gamma_P(G)$ and $\max\{\gamma_P(T): \text{$T$ is a spanning tree of $G$}\}$ are incomparable. So instead we explored properties of trees that would be useful in the study of $\gamma_P(T_1\Box T_2)$ where $T_1$ and $T_2$ are both trees. In the following result, we will consider rooting a tree $T$ at some vertex $r$ and define $A_i = \{v \in V(T): d_T(v, r) = i\}$. If $w \in A_i$ for some $i \ge 1$ and $z$ is a neighbor of $w$ in $A_{i+1}$, we will refer to $z$ as a {\it descendant of $w$}. 

\begin{theorem}\label{thm:treepart} For any tree $T$ on at least two vertices, there exists a partition $\Pi_1 \cup \cdots \cup \Pi_{\spider(T)}$ such that $T[\Pi_i]$ is connected and for each $i \in [\spider(T)]$, $\Pi_i$ contains two distinct leaves of $T$, say  $w_i$ and $x_i$, such that the  $w_ix_i$-path $P$ in $T$ satisfies that if $t$ on $P$ is power dominated by $V(T) - \Pi_i$, then both neighbors of $t$ on $P$ are not power dominated by $V(T) - \Pi_i$. 
\label{thm:treeell}
\end{theorem}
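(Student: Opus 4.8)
The plan is to induct on $\spider(T)$, using that $\spider(T)=\gamma_P(T)$ for trees and that $\spider(T)=1$ precisely when $T$ is a spider. Before anything else I would reduce the path condition to a purely local statement. For a connected set $\Pi_i$ in a tree, applying the domination step to $S=V(T)-\Pi_i$ observes all of $S$ together with exactly the \emph{boundary} vertices $B_i$ of $\Pi_i$ (those having a neighbor outside $\Pi_i$); thereafter every vertex outside $\Pi_i$ is observed, so no vertex of $S$ can ever force into $\Pi_i$, and propagation inside $\Pi_i$ coincides with the zero forcing process on $T[\Pi_i]$ started from $B_i$. Thus $v\in\Pi_i$ is power dominated by $V(T)-\Pi_i$ if and only if $v$ lies in the zero forcing closure of $B_i$ in $T[\Pi_i]$, and the theorem's requirement becomes: the vertices of $P$ in this closure contain no two consecutive vertices of $P$. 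For the base case $\spider(T)=1$ the tree is a spider, so I take $\Pi_1=V(T)$ and let $w_1,x_1$ be the ends of any two legs; here $B_1=\varnothing$, nothing is power dominated, and the condition holds vacuously.

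For the inductive step I would root $T$ at a leaf and let $b$ be a deepest branch vertex. Since nothing branches below $b$, the vertex $b$ together with all of its descendants forms an induced spider $S$ with center $b$ whose $r\ge 2$ legs are pendant paths of $T$ ending at leaves of $T$. I set $\Pi_{\spider(T)}=S$, choose $w,x$ to be the ends of two of these legs, and let $P$ be the $wx$-path through $b$. The only edge joining $S$ to the rest of $T$ is the one at $b$, so $B_{\spider(T)}=\{b\}$; since $b$ is the center and has at least two unobserved neighbors in $T[S]$, it can never force, the zero forcing closure of $\{b\}$ is $\{b\}$ itself, and hence $b$ is the unique vertex of $P$ that is power dominated, with both of its neighbors on $P$ unobserved. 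I would then pass to $T'=T-S$, show $\spider(T')=\spider(T)-1$ (the standard reduction underlying the tree algorithm for $\gamma_P$), and apply the induction hypothesis to obtain the remaining parts $\Pi_1,\dots,\Pi_{\spider(T)-1}$.

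The main obstacle is reconciling the inductive parts, which satisfy the path condition relative to $T'$, with the requirement that it hold relative to the full tree $T$. Reinstating $S$ enlarges $V(T')-\Pi_i$ only through the single attachment vertex $b'$ (the parent of $b$), so the only possible damage is that $b'$ acquires a new outside neighbor and is thereby added to some boundary set $B_j$. The core computation above shows this is harmless as long as the two legs of $P_j$ remain free of boundary vertices and end at leaves of $T$: in that case the center of $\Pi_j$ always retains two unobserved neighbors and never forces along $P_j$, so at most the center is power dominated and alternation survives. I therefore expect to carry a strengthened inductive hypothesis that forbids $b'$ from being used as a leaf-end of a distinguished path $P_j$---equivalently, that keeps the two legs of every $P_j$ as boundary-free pendant paths of $T$---and the technical heart of the argument is to prove that a minimum spider cover can always be refined to meet this extra requirement while preserving the count $\spider(T)$. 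Once that bookkeeping is in place, the local zero forcing reformulation makes the alternation condition immediate for every part.
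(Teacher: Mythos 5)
Your reformulation of the path condition as zero forcing inside $T[\Pi_i]$ started from the boundary $B_i$ is correct, and it is a clean way of phrasing what the paper checks by hand; the base case is also fine. The fatal problem is the inductive step: the claim that deleting a deepest branch vertex $b$ together with all its descendants yields a tree $T'=T-S$ with $\spider(T')=\spider(T)-1$ is simply false, and it is not the reduction used by the tree algorithm for $\gamma_P$. Concretely, let $T$ have a vertex $c$ adjacent to leaves $f_1,f_2,f_3$ and to $a_1$, an edge $a_1a_2$, a vertex $a_2$ adjacent to $q_1$ and to a leaf $q_2$, and a vertex $b$ adjacent to $q_1$ and to leaves $l_1,l_2$. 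Then $\spider(T)=2$, witnessed by the cover $\{c,f_1,f_2,f_3,a_1\}$ and $\{a_2,q_1,q_2,b,l_1,l_2\}$ (the second part is a spider centered at $b$ with legs $l_1$, $l_2$ and $q_1a_2q_2$). Rooting at $f_1$, the unique deepest branch vertex is $b$, so your $S=\{b,l_1,l_2\}$; but $T-S$ still contains the two branch vertices $c$ and $a_2$, so $\spider(T-S)=2=\spider(T)$, not $\spider(T)-1$. Your induction would then produce $\spider(T-S)+1=3$ parts, one too many. The defect is robust: for every choice of root leaf in this tree the deepest branch vertex is $b$ or $c$, and deleting either spider ($\{b,l_1,l_2\}$ or $\{c,f_1,f_2,f_3\}$) leaves a tree whose spider number is still $2$.

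This stuck case, $\spider(T')=\spider(T)$, is exactly what the paper's proof is built to handle: in each of its cases it distinguishes whether the spider number drops when a subtree is cut off, and when it does not drop, the removed vertices must be merged into an existing part of the inductive partition (in the example above, $S$ must be absorbed into $\{a_2,q_1,q_2\}$ to recover the correct two-part partition), after which the path condition must be re-verified for the merged part. The paper's apparatus --- minimal counterexample, contraction of adjacent degree-$2$ vertices, the strong support vertex cases, and the case analysis on the degree of the vertex above the deleted piece --- exists precisely to control this merging. Your proposal instead defers the difficulty into an unproven ``strengthened inductive hypothesis'' asserting that a minimum spider cover can always be refined to keep the distinguished paths boundary-free; that assertion is essentially the theorem itself rather than bookkeeping, so as written the argument has a genuine gap in addition to the false reduction.
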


\begin{proof} Throughout the proof, we will say use the following terminology. Suppose $\Pi_1 \cup  \cdots \cup \Pi_{\spider(T)}$ is a partition of $V(T)$ and for some $i\in [\spider(T)]$, $\Pi_i$ contains two distinct leaves of $T$, say $w_i$ and $x_i$, such that the  $w_ix_i$-path $P$ in $T$ satisfies that if $t$ on $P$ is power dominated by $V(T) - \Pi_i$, then both neighbors of $t$ on $P$ are not power dominated by $V(T) - \Pi_i$. In this case, we will say that $P$ satisfies Condition (1).  We proceed by induction on $m=\spider(T)$. If $\spider(T) = 1$, then our partition contains only one set, namely $\Pi_1 =V(T)$, in which case $V(T)$ contains at least two leaves, say $w$ and $x$, and since there are no vertices outside $\Pi_1$, the statement of the theorem is vacuously true. 

Suppose the statement of the theorem is true for all trees $T'$ with $\spider(T') \le m$. Furthermore, suppose there exists a counterexample $T$ with $\spider(T) = m+1$. Among all such counterexamples, choose one of minimum order. First, we claim that we may assume that $T$ does not contain a pair of adjacent vertices each having degree $2$. To see this, suppose $uv \in E(T)$ where $\deg_T(u) = \deg_T(v) = 2$. Let $T'$ be the tree obtained from $T$ by contracting $u$ and $v$ and let $z$ represent the contracted vertex in $T'$. It is clear that $\spider(T) = \spider(T')$. Moreover, since $|V(T')| < |V(T)|$, $T'$ is not a counterexample to the theorem. Thus, there exists a partition $\Pi = \Pi_1 \cup \cdots \cup \Pi_{\spider(T')}$ that satisfies the statement of the theorem. Without loss of generality, we may assume $z \in \Pi_1$. By assumption, $\Pi_1$ contains two leaves of $T'$, say $w_1$ and $x_1$ such that the $w_1x_1$-path $P$ in $T'$ satisfies that if $t$ is on $P$ and is power dominated by $V(T') - \Pi_1$, then both neighbors of $t$ on $P$ are not power dominated by $V(T') - \Pi_1$. Consider the partition $\Pi' = \Pi_1' \cup \Pi_2 \cup \cdots \cup \Pi_{\spider(T')}$ of $V(T)$ where $\Pi_1' = (\Pi_1 - \{z\}) \cup \{u,v\}$. We claim that $\Pi'$ satisfies the statement of the theorem with respect to $T$. Note first that $w_1$ and $x_1$ are still leaves in $T$. If the $w_1x_1$-path $P'$ in $T'$ does not contain $z$, then $P'$ is the $w_1x_1$-path in $T$ and Condition (1) is still satisfied. Thus, we shall assume $z$ is on $P'$. It follows that the $w_1x_1$-path in $T$ is obtained from $P'$ by replacing $z$ with $uv$. Moreover, $\deg_{T'}(z) = 2$ and both neighbors of $z$ are in $\Pi_1$. Thus, both neighbors of $u$ and $v$ are also in $\Pi_1'$ and Condition (1) is still satisfied. Furthermore, for each $2 \le i \le \spider(T')$, each $\Pi_i$ contains two distinct leaves, say $w_i$ and $x_i$, such that the $w_ix_i$-path $P'$ in $T'$ satisfies Condition (1). $w_i$ and $x_i$ will also be leaves in $T$ and the $w_ix_i$-path in $T$ will still be $P'$, which means the statement of the theorem is still satisfied with respect to $T$. However, this is a contradiction as no such partition of $V(T)$ is assumed to exist. Therefore, we may assume for the remainder of the proof that no pair of adjacent vertices in $T$ each have degree $2$. 

Pick a leaf $\ell$ in $T$ and root $T$ at $\ell$. Define $A_i = \{v\in V(T): d_T(\ell, v) = i\}$ and assume $d= \max_v\{d_T(v, \ell)\}$. Note that all vertices in $A_d$ are leaves in $T$. Suppose first that $A_{d-1}$ contains a strong support vertex $x$.

\medskip

\noindent{\underline{\textbf{Case 1a}}} \ \ Suppose that the neighbor of $x$ in $A_{d-2}$ is $s$ where $\deg_T(s) >3$.

\medskip

 Let $T'$ be the tree in $T- \{sx\}$ that contains $s$.  Note that $\spider(T') \le \spider (T)$. Suppose first that $\spider(T') = \spider(T)$. Since $T$ was assumed to have minimum order, we can find a partition $\theta=\theta_1\cup \cdots \cup \theta_{\spider(T')}$ of $V(T')$ that satisfies the statement of the theorem. Reindexing if necessary, we shall assume $s \in \theta_1$. Consider the partition $\Pi = \Pi_1 \cup \theta_2 \cup \cdots \cup \theta_{\spider(T')}$ of $V(T)$ where $\Pi_1 = \theta_1 \cup (V(T)- V(T'))$. One can easily verify that this partition satisfies the statement of the theorem, which is a contradiction. Therefore, we shall assume $\spider(T') < \spider(T)$. Note that in this case $\spider(T') = \spider(T)-1$. Again, we may assume there exists a partition $\theta = \theta_1 \cup \cdots \cup \theta_{\spider(T')}$ of $V(T')$ satisfying the statement of the theorem where $s \in \theta_1$. Consider the partition $\Pi=\Pi_1 \cup \theta_1 \cup \cdots \cup \theta_{\spider(T')}$ where $\Pi_1 = V(T) - V(T')$. It is clear that $\Pi_1$ contains two leaves, say $\ell_1$ and $\ell_2$, adjacent to $x$ as $x$ is a strong support vertex. Moreover, the only $\ell_1\ell_2$-path  in $T$ is $\ell_1x\ell_2$ and $V(T) - \Pi_1$ does not power dominate $\ell_1$ or $\ell_2$. It is also clear that each $\theta_i$ for $2 \le i \le \spider(T')$ contains two leaves, say $w_i$ and $x_i$, such that the $w_ix_i$-path $P$ in $T'$ (and $T$) satisfies Condition (1). As $w_i$ and $x_i$ are still leaves in $T$ and every vertex on the $w_ix_i$-path in $T'$ has the same closed neighborhood in $T'$ as $T$, the $w_ix_i$-path in $T$ still satisfies Condition (1). Thus, we only need to consider $\theta_1$.  By assumption, there exist two leaves $w_1$ and $x_1$ in $T'$ such that the $w_1x_1$-path $P$ in $T'$ satisfies Condition (1). If $s$ is not on $P$, then we are done. So we shall assume $s$ is on $P$ and $w_1$ and $x_1$ are leaf descendants of $s$. If $s$ is power dominated by $\theta_2 \cup \cdots \cup \theta_{\spider(T')}$, then by assumption the neighbors of $s$ are not power dominated by $V(T') - \theta_1$ and we are done. Therefore, we shall assume $s$ is not power dominated by $\theta_2 \cup \cdots \cup \theta_{\spider(T')}$. This implies that $N_T(s) - \{x\}$ is contained in $\theta_1$. Since $\deg_T(s) > 3$, $s$ has at least two leaf descendants, say $r_1$ and $r_2$ (not necessarily different from $w_1$ and $x_1$), and the only $r_1r_2$-path $P'$ in both $T$ and $T'$ must satisfy Condition (1) with respect to $T$. Thus, $\Pi$ is such a partition of $T$, which is a contradiction. Hence, this case cannot occur. 

\vskip5mm

\noindent{\underline{\textbf{Case 2a}}} \ \ Suppose that the neighbor of $x$ in $A_{d-2}$ is $s$ where $\deg_T(s) = 3$. 

\medskip 
Write $N_T(s) \cap A_{d-1} = \{z, x\}$. Suppose first that $\deg_T(z) \ge3$. Thus, $z$ is a strong support vertex of $T$. Let $T'$ be the component of $T- \{sx\}$ that contains $s$. Similar to Case 1a, we may assume $\spider(T') = \spider(T)-1$ and $T'$ contains a partition $\theta = \theta_1 \cup \cdots \cup \theta_{\spider(T')}$ that satisfies the statement of the theorem where $s \in \theta_1$. We claim that $\Pi = \Pi_1 \cup \theta_1 \cup \cdots \cup \theta_{\spider(T')}$ is a partition of $V(T)$ satisfying the statement of the theorem where $\Pi_1 = V(T) - V(T')$. As in Case 1a, all we need to show is that $\theta_1$ contains two leaves, say $w_1$ and $x_1$, such that the $w_1x_1$-path in $T$ satisfies Condition (1).  By assumption, $\theta_1$ contains two leaves, say $w_1$ and $x_1$, such that the $w_1x_1$-path $P$ in $T'$ satisfies Condition (1) with respect to $T'$. Moreover, we may assume $s$ is not power dominated by $\theta_2 \cup\cdots \cup \theta_{\spider(T')}$. Thus, $z \in \theta_1$ and all descendants of $z$ are in $\theta_1$. Since $z$ is a strong support vertex, it has two leaves, say $r_1$ and $r_2$, such that the only $r_1r_2$-path in $T$ is $r_1zr_2$. This path satisfies Condition (1) and it follows that $\Pi$ is a partition satisfying the statement of the theorem, which is a contradiction.

Therefore, we shall assume $\deg_T(z) \le 2$. Let $r$ be the neighbor of $s$ in $A_{d-3}$. Let $T'$ be the component of $T - \{rs\}$ containing $r$. Suppose first that $T'$ contains no vertex of degree $3$ or more. It follows that $T'$ is a path. In this case, $\spider(T) = 2$ and $\Pi_1 \cup \Pi_2$ where $\Pi_1 = N_T[x]\ - \{s\}$ and $\Pi_2 = V(T) - \Pi_1$ is a partition of $V(T)$ that satisfies the statement of the theorem. Therefore, we may assume $T'$ contains at least one vertex of degree $3$ or more. As in Case 1a, we may assume $\spider(T') = \spider(T)-1$ and there exists a partition $\theta = \theta_1 \cup \cdots \cup \theta_{\spider(T')}$ of $V(T')$ that satisfies the statement of the theorem where $r \in \theta_1$. We claim that $\Pi = \Pi_1 \cup \theta_1' \cup \theta_2 \cup \cdots \cup \theta_{\spider(T')}$ is a partition of $V(T)$ that satisfies the statement of the theorem where $\Pi_1 = N_T[x] - \{s\}$ and $\theta_1' = \theta_1 \cup N_T[z]$. As before, we need only check that $\theta_1'$ contains a path that satisfies Condition (1).  By assumption, $\theta_1$ contains two leaves of $T'$, say $w_1$ and $x_1$, such that the $w_1x_1$-path $P$ in $T'$ satisfies Condition (1). Suppose first that one of $w_1$ or $x_1$ is $r$. Without loss of generality, we may assume $w_1 = r$. It follows that $|N_T(r)| = 2$. If $z$ is a leaf of $T$, set $w_1' = z$. If $z$ is not a leaf, then $z$ has exactly one leaf, call it $y$,  and set $w_1' = y$. We claim the $w_1'x_1$-path $P'$ in $T$ satisfies Condition (1). Assume first that $P' = v_0v_1\dots v_krsz$ where $v_0=x_1$. By assumption, if $v_i$ is power dominated by $\theta_2 \cup \cdots \cup \theta_{\spider(T')}$, then $v_{i-1}$ and $v_{i+1}$ are not. Moreover, $r$ is not power dominated by $\theta_2 \cup \cdots \cup \theta_{\spider(T')}$. Thus, even though $s$ is power dominated by $\Pi_1$, $r$ and $z$ will never be observed by $\Pi_1 \cup \theta_2 \cup \cdots \cup \theta_{\spider(T')}$. Thus, Condition (1) holds for $P'$. A similar argument when $w_1' = y$ yields another contradiction. Therefore, we shall assume neither $w_1$ nor $x_1$ is $r$. If $r$ is power dominated by $\theta_2 \cup \cdots \cup \theta_{\spider(T')}$, then $P'$ still satisfies Condition (1) with respect to $\Pi$ in $T$. Therefore, we shall assume that $r$ is not power dominated by $\theta_2 \cup \cdots \cup \theta_{\spider(T')}$. Note that $s$ will be power dominated by $\Pi_1$, but $r$ will not be observed by $s$. Thus, $P'$ still satisfies Condition (1) with respect to $\Pi$ in $T$. It follows that $\Pi$ is indeed a partition satisfying the statement of the theorem, which is a contradiction. 
\vskip5mm

\noindent\underline{\textbf{Case 3a}} \ \ Suppose that the neighbor of $x$ in $A_{d-2}$ is $s$ where $\deg_T(s) \le 2$. 

\medskip
Note that we may assume $\deg_T(s) = 2$ for otherwise $T$ is a spider and $\spider(T) = 1 < m+1$, which is a contradiction. It follows that we can write $N_T(s) = \{x, r\}$ where $r \in A_{d-3}$. Either $\deg_T(r) \ge 3$ or $r$ is a leaf of $T$ as $T$ is assumed to not contain two adjacent vertices of degree $2$. If $r$ is a leaf, then $r = \ell$ and $T$ is a spider. However, this contradicts the assumption that $\spider(T) >1$. Thus, we shall assume $\deg_T(r) \ge3$. If $\deg_T(r)>3$, then using similar arguments as in Case 1a where we replace $s$ with $r$ will lead to a contradiction. On the other hand, if $\deg_T(r) = 3$, then using similar arguments as in Case 2a where we replace $s$ with $r$ will lead to another contradiction. Hence, this case cannot occur either. 
\medskip

Having considered all cases where $A_{d-1}$ contains a strong support vertex,  we shall assume for the remainder of the proof that $A_{d-1}$ contains no strong support vertex. Thus, $A_{d-2}$ must contain a vertex of degree $3$ or more since we assumed $T$ does not contain two adjacent vertices of degree $2$. Choose $x$ in $A_{d-2}$ of degree $3$ or more that contains a leaf descendant in $A_d$.  Let $s$ be the neighbor of $x$ in $A_{d-3}$. 
\vskip5mm
\noindent\underline{\textbf{Case 1b}} Suppose first that $\deg_T(s) >3$.

\medskip
As in Case 1a, we let $T'$ be the component of $T- \{xs\}$ containing $s$. Using similar arguments to that in Case 1a, we may assume $\spider(T') = \spider(T) -1$ and there exists a partition $\theta = \theta_1 \cup \cdots \cup \theta_{\spider(T')}$ of $V(T')$ that satisfies the statement of the theorem where $s \in \theta_1$. Root $T'$ at $\ell$ and let $A_i' = A_i \cap V(T')$. As in Case 1a, consider the partition $\Pi = \Pi_1 \cup \theta_1 \cup \cdots \cup \theta_{\spider(T')}$ where $\Pi_1 = V(T) - V(T')$. As above, we need only to check that $\theta_1$ contains a path that satisfies Condition (1).  By assumption, there exist two leaves $w_1$ and $x_1$ in $\theta_1$ such that the $w_1x_1$-path $P$ in $T'$ satisfies Condition (1) with respect to $T'$.  If $s$ is power dominated by $\theta_2 \cup \cdots \cup \theta_{\spider(T')}$, then $P$ still satisfies Condition (1) with respect to $\Pi$. Therefore, we shall assume $s$ is not power dominated by $\theta_2 \cup \cdots \cup \theta_{\spider(T')}$. Thus, $N_T(s) - \{x\} \subseteq \theta_1$. Let $z_1, \dots, z_k$ represent the neighbors of $s$ in $A_{d-2}'$. We claim that all descendants of $z_i$ are in $\theta_1$. To see this, note that if $v$ is a neighbor of $z_i$ in $A_{d-1}'$, then $\deg_T(v) \le 2$ by assumption. Thus, $v$ must be in $\theta_1$ for otherwise $v \in \theta_r$ for $r\ne 1$ and $\theta_r$ does not contain a path that satisfies Condition (1). It follows that all descendants of $z_i$ are in $\theta_1$. If $\deg_T(z_i) \ge 3$, then $z_i$ has at least two leaf descendants, say $r_1$ and $r_2$, and the $r_1r_2$-path in $T$ satisfies Condition (1). On the other hand, if $\deg_T(z_i) \le 2$ for $i \in [k]$, then we can choose a leaf descendant of $z_i$ (or $z_i$ itself if $z_i$ is a leaf), and call it $r_i$ for $i \in [2]$. The $r_1r_2$-path in $T$ satisfies Condition (1). In either case, $\Pi$ is indeed a partition of $V(T)$ satisfying the statement of the theorem, which is a contradiction. Hence, this case cannot occur.
\vskip5mm

\noindent\underline{\textbf{Case 2b}} \ \ Next, assume $\deg_T(s) = 3$. Using arguments similar to that in Case 2a, we arrive at a similar contradiction.

\medskip

\noindent\underline{\textbf{Case 3b}} \ \ Finally, we shall assume $\deg_T(s) = 2$ and write $N_T(s) = \{x, r\}$. 
\medskip

It follows that $\deg_T(r) = 1$ or $\deg_T(r) \ge 3$. Note that if $r$ is a leaf, then $T$ is a spider which contradicts the assumption that $\spider(T)>1$. Therefore, we shall assume $\deg_T(r) \ge 3$. Let $T'$ be the component of $T- \{rs\}$ that contains $r$. Thus, $\spider(T') \le \spider(T)$. Using similar arguments as those in Case 1a, we may assume $\spider(T') = \spider(T)-1$ and there exists a partition $\theta = \theta_1 \cup \cdots \cup \theta_{\spider(T')}$ of $V(T')$ that satisfies the statement of the theorem where $r \in \theta_1$. As before, we root $T'$ at $\ell$ and let $A_i' = A_i \cap V(T')$. We first assume that $\deg_T(r) >3$ or some descendant of $r$ in $T'$ has degree $3$ or more. Consider $\Pi  = \Pi_1 \cup \theta_1\cup \cdots \cup \theta_{\spider(T')}$ where $\Pi_1 = V(T) - V(T')$. Let $z_1, \dots, z_k$ represent the descendants of $r$ in $T'$ in $A'_{d-3}$. Furthermore, let $y$ be the neighbor of $r$ in $A'_{d-5}$. Reindexing if necessary, we may assume for $1 \le j \le k$ that $\deg_T(z_i) \le 2$ for $i \in [j]$ and $\deg_T(z_i) \ge 3$ for $j+1 \le i \le k$. For each $i \in \{j+1, \dots, k\}$,  let $v_{i,1}, \dots, v_{i,\ell_i}$ be the neighbors of $z_i$ in $A'_{d-2}$. First note, since each $v_{i,j} \in A'_{d-2}$ and $A_{d-1}$ contains no strong support vertices, for each descendent $w$ of $v_{i,j}$, $\deg_T(w)\le 2$.  As a result, for each $v_{i,j}$, every descendent of $v_{i,j}$ is in the same element of $\Pi$ as $v_{i,j}$. Next note that if for some $i \in \{j+1, \dots, k\}$ that $v_{i,m}$ and $v_{i,n}$ are both in $\theta_1$, then we can find a leaf descendant of $v_{i,m}$ (possibly itself) and a leaf descendant of $v_{i,n}$ (possibly itself) such that the path between them satisfies Condition (1)  and $\Pi$ is indeed a partition of $V(T)$ that satisfies the statement of the theorem. Therefore, we shall assume for each $i \in \{j+1, \dots, k\}$ that at most one vertex of the form $v_{i,m}$ is in $\theta_1$. Now, by assumption, $\theta_1$ contains two leaves of $T'$, call them $w_1$ and $x_1$, such that the $w_1x_1$-path $P$ in $T'$ satisfies Condition (1). Note that if $r$ is power dominated by $\theta_2 \cup \cdots \cup \theta_{\spider(T')}$, then $P$ still satisfies Condition (1) with respect to $\Pi$. Therefore, we shall assume $r$ is not power dominated by $\theta_2 \cup \cdots \cup \theta_{\spider(T')}$. It follows that each $z_i$ is in $\theta_1$ and each $z_i$ where $\deg_T(z_i) \ge 2$ must have a neighbor in $\theta_1 \cap A'_{d-2}$ for otherwise $r$ is power dominated by $\theta_2 \cup \cdots \cup \theta_{\spider(T')}$. 

Next, we claim that we may assume $j < 2$. To see this, suppose both $z_1$ and $z_2$ have degree at most $2$ in $T$. Then we can find a leaf descendant of $z_i$ (possibly itself), call it $r_i$, for $i \in [2]$ and the $r_1r_2$-path in $T$ satisfies Condition (1). Therefore, we shall assume $j < 2$.

Suppose that $\deg_T(z_1) =1$ for the time being. Let $T''$ be the component of $T- \{ry\}$ containing $y$. For each $z_i$ where $2 \le i \le k$, we have assumed $z_i$ has exactly one neighbor in $\theta_1 \cap A'_{d-2}$. We shall assume $v_{i, \ell_i}$ is that neighbor. This implies that $\deg_T(v_{i,j}) \ge 3$ for $1 \le j \le \ell_i -1$ and $2 \le i \le k$. Furthermore, $\deg_T(v_{i, \ell_i}) \le 2$ for otherwise $v_{i, \ell_i}$ has two leaf descendants and the path between them satisfies Condition (1). For each $i \in \{2, \dots, k\}$, let $R_i$ represent $v_{i, \ell_i}$ together with any descendant of $v_{i, \ell_i}$. Reindexing if necessary, we may write 
\[\theta = \theta_1 \cup \theta_2 \cup \cdots \cup \theta_{\beta} \cup \theta_{\beta+1} \cup \cdots \cup \theta_{\spider(T')}\]
such that $\theta_{\alpha}$ for $2 \le \alpha \le \beta$ contains a vertex of $\cup_{i=2}^k \{v_{i,1}, \dots, v_{i,\ell_i-1}\}$. Thus, 
\[\beta-1 = \sum_{i=2}^k (\deg_T(z_i) - 2).\]
We claim that $\spider(T'') \ge \spider(T) - \beta$. To see this, pick a spider partition of $T''$, call it $\Sigma$ and define 
\[\Gamma_1 = (V(T) - V(T')) \cup \{r, z_1\}\]
\[\Gamma_{\alpha} = \begin{cases} 
\theta_{\alpha} & \text{if $\theta_{\alpha}$ contains $v_{i,j}$ for $2 \le i \le k, 1\le j \le \ell_i-2$}\\
\theta_{\alpha} \cup \{z_i\} \cup R_{i} & \text{if $\theta_{\alpha}$ contains $v_{i, \ell_i-1}$}
\end{cases}\]
Thus, $\Sigma \cup \Gamma_1 \cup \cdots \cup\Gamma_{\beta}$ is a spider partition of $T$ of cardinality $\spider(T'') + \beta$. It follows that $\spider(T'') \ge \spider(T) - \beta$. By the induction hypothesis and choice of minimal counterexample with respect to order,  we can find a partition $\Sigma = \Sigma_1 \cup \cdots \cup \Sigma_{\spider(T'')}$ of $V(T'')$ that satisfies the statement of the theorem where $y \in \Sigma_{\spider(T'')}$. Suppose first that either $y$ is a leaf of $T''$ such that there exists a leaf $w \in \Sigma_{\spider(T'')}$ where the $yw$-path $P$ in $T''$ satisfies Condition (1), or $y$ is not power dominated by $V(T'') - \Sigma_{\spider(T'')}$. We claim 
\[\Pi = \Sigma_1 \cup \cdots \cup \Sigma_{\spider(T'') -1} \cup \Sigma_{\spider(T'')}' \cup \Gamma_2 \cup \cdots \cup \Gamma_{\beta} \cup \Pi_1\] is a partition of $V(T)$ that satisfies the statement of the theorem where $\Sigma_{\spider(T'')}' = \Sigma_{\spider(T'')} \cup \{r, z_1\}$ and $\Gamma_{\alpha}$ is defined as above. We need only to check that $\Sigma'_{\spider(T'')}$ contains a path that satisfies Condition (1). We shall assume first that $y$ is a leaf of $T''$ such that there exists $w \in \Sigma_{\spider(T'')}$ where the $yw$-path $P$ in $T''$ satisfies Condition (1). We claim that the $z_1w$-path in $T$ satisfies Condition (1). Note that $\Sigma_1 \cup \cdots \cup \Sigma_{\spider(T'')-1}$ does not power dominate $y$. Moreover, although $\Pi_1$ power dominates $r$, $z_1$ and $y$ will never be observed with respect to $\Pi - \Sigma'_{\spider(T'')}$. Thus, $\Pi$ is in fact a partition of $V(T)$ satisfying the statement of the theorem. Therefore, we shall assume that $y$ is not a leaf of $T''$ and $y$ is not power dominated by $V(T'') - \Sigma_{\spider(T'')}$. By assumption, $\Sigma_{\spider(T'')}$ contains two leaves in $T''$, say $w_1$ and $x_1$ (neither of which is $y$), such that the $w_1x_1$-path $P$ in $T''$ satisfies Condition (1). Note that since $y$ is not power dominated by $V(T) - \Sigma'_{\spider(T'')}$, then $P$ still satisfies Condition (1) with respect to $T$. Thus, $\Pi$ is a partition of $V(T)$ satisfying the statement of the theorem. Therefore, we shall assume that $y$ is power dominated by $V(T'') - \Sigma_{\spider(T'')}$. In this case, consider 
\[\Pi' = \Sigma_1 \cup \cdots  \cup \Sigma_{\spider(T'')}\cup \Gamma_2 \cup \cdots \cup \Gamma_{\beta} \cup \Pi'_1\] 
where $\Pi_1' = \Pi_1 \cup \{r, z_1\}$. One can easily verify that $\Pi'$ is a partition of $V(T)$ that satisfies the statement of the theorem. In each case, we get a contradiction. Therefore, this case cannot occur. Furthermore, a similar argument can be used to show that $\deg_T(z_1) \ne 2$.

Next, we assume $\deg_T(z_i) \ge 3$ for all $i \in [k]$. Consider $T''$ obtained from $T$ where we remove all edges incident to $r$ other than $rs$ and $ry$. We claim that $\spider(T'') \ge \spider(T) - (\beta -1)$. As above, we may assume for each $z_i$ for $i \in [k]$ that $z_i$ has exactly one neighbor, namely $v_{i, \ell_i}$, in $\theta_1 \cap A'_{d-2}$. Reindexing if necessary, we may write 
\[\theta = \theta_1 \cup \theta_2 \cup \cdots \cup \theta_{\beta} \cup \theta_{\beta+1} \cup \cdots \cup \theta_{\spider(T')}\]
such that $\theta_{\alpha}$ for $2 \le \alpha \le \beta$ contains a vertex of $\cup_{i=1}^k \{v_{i,1}, \dots, v_{i,\ell_i-1}\}$. Thus, 
\[\beta-1 = \sum_{i=1}^k (\deg_T(z_i) - 2).\]
Pick a spider partition of $T''$, call it $\Sigma$, and define 
\[\Gamma_{\alpha} = \begin{cases} 
\theta_{\alpha} & \text{if $\theta_{\alpha}$ contains $v_{i,j}$ for $1 \le i \le k, 1\le j \le \ell_i-2$}\\
\theta_{\alpha} \cup \{z_i\} \cup R_{i} & \text{if $\theta_{\alpha}$ contains $v_{i, \ell_i-1}$}
\end{cases}\]
Thus, $\Sigma \cup \Gamma_2 \cup \cdots \cup \Gamma_{\beta}$ is a spider partition of $T$ of cardinality $\spider(T'') + (\beta-1)$. It follows that $\spider(T'') \ge \spider(T) - (\beta-1)$. By the induction hypothesis and choice of minimal counterexample with respect to order, we can find a partition $\Sigma = \Sigma_1 \cup \cdots \cup \Sigma_{\spider(T'')}$ of $V(T'')$ that satisfies the statement of the theorem where $r \in \Sigma_1$. We claim that 
\[\Pi = \Sigma_1\cup \cdots \cup \Sigma_{\spider(T'')} \cup \Gamma_2 \cup \cdots \cup \Gamma_{\beta}\]
where $\Gamma_{\alpha}$ is as defined above is a partition of $V(T)$ that satisfies the statement of the theorem. Note that we need only check that $\Sigma_1$ contains a path that satisfies Condition (1). By assumption, $\Sigma_1$ contains two leaves of $T''$, say $w_1$ and $x_1$ (neither of which is $r$), such that the $w_1x_1$-path $P$ in $T''$ satisfies Condition (1). If $r$ is not on $P$, then $\Pi$ satisfies the statement of the theorem. Therefore, we shall assume that $r$ is on $P$. It follows that one of $w_1$ or $x_1$ is a leaf descendant of $r$ in $T''$. In particular, we may assume $w_1$ is a leaf descendant of $x$ in $A_d$. Moreover, as $A_{d-1}$ contains no strong support vertex, all descendants of $r$ in $T''$ are in $\Sigma_1$. However, we have assumed $\deg_T(x) \ge 3$ and therefore, $\Sigma_1$ contains another leaf descendant of $x$, call it $t$, such that the $w_1t$-path in $T$ satisfies Condition (1). It follows that $\Pi$ satisfies the statement of the theorem.

Finally, consider when $\deg_T(r) = 3$ and no descendant of $r$ in $T'$ has degree $3$ or more. Write $N_T(r) = \{y, s, z\}$. In this case, we let $T''$ be the component of $T - \{ry\}$ containing $y$. As before, we may assume $\spider(T'') = \spider(T) -1$ and there exists a partition of $V(T'')$, say $\theta = \theta_1 \cup \cdots \cup \theta_{\spider(T'')}$ that satisfies the statement of the theorem where $y \in \theta_1$. If either $y$ is a leaf of $T''$ such that there exists a leaf in $T''$ $w \in \theta_1$ where the $yw$-path $P$ in $T''$ satisfies Condition (1), or $y$ is power dominated by $V(T'') - \theta_1$, then using similar arguments as above, one can easily verify that 
$\Pi = \Pi_1 \cup \theta'_1 \cup \cdots \cup \theta_{\spider(T'')}$ where $\Pi_1$ contains $s$ and all descendants of $s$ and $\theta_1'$ contains $\theta_1$ together with $r, z$, and any descendant of $z$  indeed satisfies the statement of the theorem. On the other hand, if $y$ is not a leaf of $T''$ and is not power dominated by $V(T'') - \theta_1$, then one can easily verify that $\Pi = \Pi_1 \cup \theta_1 \cup \cdots \cup \theta_{\spider(T'')}$ where $\Pi_1 = V(T) - V(T'')$ satisfies the statement of the theorem. 

\end{proof}

In the next section, we use Theorem~\ref{thm:treepart} to show that $\gamma_P(T_1\Box T_2) \ge \gamma_P(T_1)\gamma_P(T_2)$ for any trees $T_1$ and $T_2$. 

\section{Cartesian products} \label{sec:prod}
There are two main goals of this section. The first is to provide a lower bound for $\gamma_P(G\Box H)$. The second is to prove a  Vizing-like bound holds for the product of two trees. Given any graph $G$ and any partition $\Pi = \Pi_1 \cup \cdots \cup \Pi_k$ of $V(G)$ where $k\ge2$, we say $\Pi$ is a failed power dominating partition of $G$ if for each $i\in [k]$, $V(G) - \Pi_i$ does not power dominate $G$. If such a partition exists, we let $\ell_G= \max\{j: \Pi_1 \cup \cdots \cup \Pi_j \text{ is a failed power dominating partition of $G$}\}$. If no such partition exists, we let $\ell_G = 1$. Since $V(G) - \Pi_i$ does not power dominate $G$, any power dominating set of $G$ must contain at least one vertex from each $\Pi_i$. Thus, $\ell_G \leq \gamma_P(G)$. Figure \ref{fig:sitree} gives an example of a graph where $\ell_G = \gamma_P(G) = 4$ since $\Pi_1 \cup \Pi_2\cup \Pi_3\cup \Pi_4$ where $\Pi_1 = \{\ell_1, \ell_2, \ell_3, v_1\}$, $\Pi_2 = \{u_1, u_2, z, u_3, u_4, u_5, u_6\}$, $\Pi_3 = \{v_2, \ell_4, \ell_5, \ell_6\}$, and $\Pi_4 = \{v_3, \ell_7, \ell_8, \ell_9\}$ is a failed power dominating partition of the graph. In addition, $G = K_{3, 3}$ is an example of a graph where $\ell_G = 1 < \gamma_P(G) = 2$ since no failed power dominating partition of $V(K_{3, 3})$ exists.

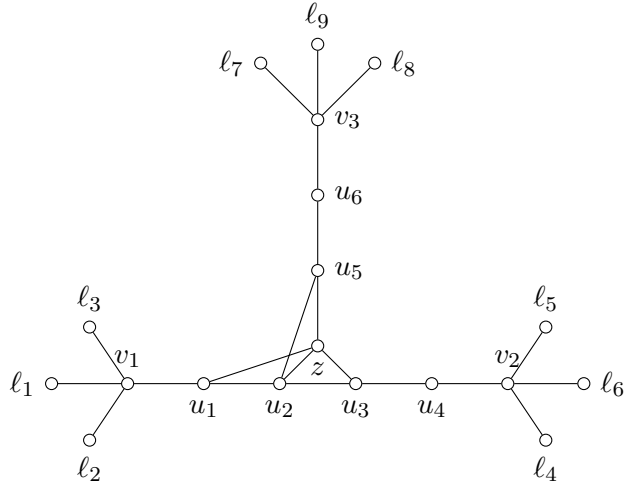
\begin{figure}[h]
\begin{center}
\begin{tikzpicture}[]
\tikzstyle{vertex}=[circle, draw, inner sep=0pt, minimum size=6pt]
\tikzset{vertexStyle/.append style={rectangle}}
	\vertex (1) at (0,0) [scale=.75, label=left:$\ell_1$] {};
	\vertex (2) at (.5,-.75) [ scale=.75, label=below:$\ell_2$] {};
	\vertex (3) at (1, 0) [ scale=.75, label=above:$v_1$] {};
	\vertex (4) at (.5, .75) [scale=.75, label=above:$\ell_3$] {};
	\vertex (5) at (2, 0) [scale=.75, label=below:$u_1$] {};
	\vertex (6) at (3, 0) [scale=.75, label=below:$u_2$] {};
	\vertex (7) at (3.5,.5) [scale=.75, label=below:$z$] {};
	\vertex (8) at (4,0) [scale=.75, label=below:$u_3$] {};
	\vertex (9) at (5,0) [scale=.75, label=below:$u_4$] {};
	\vertex (10) at (6.5,-.75) [scale=.75, label=below:$\ell_4$] {};
	\vertex (11) at (6,0) [scale=.75, label=above:$v_2$] {};
	\vertex (12) at (6.5,.75) [scale=.75,label=above:$\ell_5$] {};
	\vertex (13) at (7,0) [scale=.75, label=right:$\ell_6$] {};
	\vertex (14) at (3.5,1.5) [scale=.75, label=right:$u_5$] {};
	\vertex (15) at (3.5,2.5) [scale=.75, label=right:$u_6$] {};
	\vertex (16) at (3.5,3.5) [scale=.75, label=right:$v_3$] {};
	\vertex (17) at (2.75,4.25) [scale=.75, label=left:$\ell_7$] {};
	\vertex (18) at (4.25,4.25) [scale=.75, label=right:$\ell_8$] {};
	\vertex (19) at (3.5,4.5) [scale=.75, label=above:$\ell_9$] {};

	\path
	(1) edge (3)
	(2) edge (3)
	(4) edge (3)
	(3) edge (5)
	(5) edge (6)
	(6) edge (7)
	(7) edge (8)
	(8) edge (9)
	(9) edge (11)
	(10) edge (11)
	(12) edge (11)
	(13) edge (11)
	(7) edge (14)
	(14) edge (15)
	(15) edge (16)
	(17) edge (16)
	(18) edge (16)
	(19) edge (16)
	(6) edge (14)
	(6) edge (8)
	(5) edge (7)

	;
\end{tikzpicture}
\end{center}
\caption{An example where $\ell_G  =\gamma_P(G)$}
\label{fig:sitree}
\end{figure}

We can use failed power dominating partitions to provide a lower bound for the power domination number of the Cartesian product of two graphs. 

\begin{theorem} For any graphs $G$ and $H$, $\ell_G \ell_H \leq \gamma_P(G \Box H)$. If $\ell_G = \gamma_P(G)$ and $\ell_H = \gamma_P(H)$, then $\gamma_P(G)\gamma_P(H) \leq \gamma_P(G \Box H)$.
\label{cartlowerbound}
\end{theorem}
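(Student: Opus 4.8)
The plan is to manufacture a failed power dominating partition of $G\Box H$ directly from optimal failed power dominating partitions of the two factors, and then invoke the observation already recorded in the text: if $\Pi$ is a failed power dominating partition, then since $V(\cdot)-\Pi_k$ fails to power dominate for each block, every power dominating set must contain a vertex of each block. Concretely, I would fix a failed power dominating partition $\Pi_1^G\cup\cdots\cup\Pi_{\ell_G}^G$ of $V(G)$ and one $\Pi_1^H\cup\cdots\cup\Pi_{\ell_H}^H$ of $V(H)$ (using the single block $V(G)$ when $\ell_G=1$, noting that $\emptyset=V(G)-V(G)$ does not power dominate $G$, and similarly for $H$), and form the $\ell_G\ell_H$ blocks $\Pi_i^G\times\Pi_j^H$, which partition $V(G\Box H)$. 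Granting that this is a failed power dominating partition, every power dominating set of $G\Box H$ meets each block, so $\gamma_P(G\Box H)\ge \ell_G\ell_H$; the second assertion then follows immediately by substituting $\ell_G=\gamma_P(G)$ and $\ell_H=\gamma_P(H)$.

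The crux is to show that $S_{ij}:=V(G\Box H)-(\Pi_i^G\times\Pi_j^H)$ does not power dominate $G\Box H$. Let $\mathcal{G}\subseteq V(G)$ be the set of vertices power dominated by $V(G)-\Pi_i^G$ in $G$, and let $\mathcal{H}\subseteq V(H)$ be the set of vertices power dominated by $V(H)-\Pi_j^H$ in $H$; by choice of the factor partitions there exist $g^{*}\notin\mathcal{G}$ and $h^{*}\notin\mathcal{H}$. I would prove the invariant that throughout the observation process on $G\Box H$ started from $S_{ij}$, the observed set remains inside
\[ \mathcal{R}:=(\mathcal{G}\times V(H))\cup(V(G)\times\mathcal{H}). \]
Because $(g^{*},h^{*})\notin\mathcal{R}$, this vertex is never observed and $S_{ij}$ fails, as required.

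To establish the invariant I would induct on the steps of the process. For the initialization step, note $S_{ij}=\{(g,h): g\in V(G)-\Pi_i^G \text{ or } h\in V(H)-\Pi_j^H\}\subseteq\mathcal{R}$ since $V(G)-\Pi_i^G\subseteq\mathcal{G}$ and $V(H)-\Pi_j^H\subseteq\mathcal{H}$; a neighbor of such a vertex stays in $\mathcal{R}$ because a neighbor reached along a $G$-edge from a vertex of $V(G)-\Pi_i^G$ is dominated in the $G$-process (hence lies in $\mathcal{G}$), while a step along an $H$-edge fixes the $G$-coordinate (and symmetrically for $H$). For the propagation step, suppose an observed vertex $(g,h)\in\mathcal{R}$ forces its unique unobserved neighbor; say $g\in\mathcal{G}$. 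If the forced vertex keeps the coordinate $g$ it trivially stays in $\mathcal{R}$, so the delicate case is a force of $(g',h)$ along a $G$-edge. Here I split on whether $h\in\mathcal{H}$: if $h\in\mathcal{H}$ the forced vertex lies in $V(G)\times\mathcal{H}\subseteq\mathcal{R}$; if $h\notin\mathcal{H}$, then by the invariant every observed vertex of the layer $G\times\{h\}$ has its $G$-coordinate in $\mathcal{G}$, so $N_G(g)-\{g'\}\subseteq\mathcal{G}$, which lets $g$ force $g'$ within the $G$-process and forces $g'\in\mathcal{G}$.

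I expect this last step — transferring a force inside a $G$-layer of the product back to a legitimate force in the factor $G$ — to be the main obstacle, as it is exactly where one must use both that $\mathcal{G}$ is closed under the single-factor propagation rule and that, when $h\notin\mathcal{H}$, the observed portion of the layer $G\times\{h\}$ faithfully mirrors an observed set in $G$. The remaining work is routine: the case $h\in\mathcal{H}$ (and the case of forcing along an $H$-edge) follows by the complete $G\leftrightarrow H$ symmetry of $\mathcal{R}$, and the degenerate situations are absorbed by the convention $\Pi_1^G=V(G)$ above (giving $\mathcal{G}=\emptyset$), so that the only genuinely trivial case is $\ell_G=\ell_H=1$, where $\ell_G\ell_H=1\le\gamma_P(G\Box H)$; alternatively one may dispatch $\ell_G=1$ by $\ell_G\ell_H=\ell_H\le\gamma_P(H)\le\gamma_P(G\Box H)$ using $\ell_H\le\gamma_P(H)$ and Lemma~\ref{thm:Cartlower}.
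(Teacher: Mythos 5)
Your proposal is correct and takes essentially the same approach as the paper: both build the product partition $\{\Pi_i \times \Sigma_j\}$ and show it is a failed power dominating partition by proving that the vertices unobserved in both factor processes (your complement of $\mathcal{R}$, which equals the paper's $U_i \times V_j$) can never become observed in $G \Box H$. Your inductive invariant is just a repackaging of the paper's argument by contradiction at the first timestep a vertex of $U_i \times V_j$ is observed, and both hinge on the same fact — that the factor-observed sets $\mathcal{G}$, $\mathcal{H}$ are closed under the propagation rule, so a would-be forcer in a layer always has a second unobserved neighbor.
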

\begin{proof}
We may assume there exists a failed power dominating partition $\Pi = \Pi_1 \cup \cdots \cup \Pi_{\ell_G}$ of $G$ and failed power dominating partition $\Sigma = \Sigma_1 \cup \cdots \cup \Sigma_{\ell_H}$ of $H$. We claim that $\{\Pi_i\times \Sigma_j: 1 \le i \le \ell_G, 1 \le j \le \ell_H\}$ is a failed power dominating partition of $G\Box H$. To see this, note that since $\Pi$ is a failed power dominating partition of $G$, for each $i \in [\ell_G]$ there exists $U_i \subseteq \Pi_i$ such that  for each $u \in U_i$, $u$ is not power dominated by $V(G) - \Pi_i$. For each $i\in [\ell_G]$ we let $U_i$ represent the set of all vertices in $\Pi_i$ that are not power dominated by $V(G) - \Pi_i$. Similarly, for each $j \in [\ell_H]$, we let $V_j$ represent the set of all vertices in $\Sigma_j$ that are not power dominated by $V(H) - \Sigma_j$. We claim that for fixed $i \in [\ell_G]$ and $j \in [\ell_H]$ that no vertex of $U_i \times V_j$ will be power dominated by $V(G\Box H) - (\Pi_i \times \Sigma_j)$. Suppose to the contrary that $k$ is the first timestep where vertices from $U_i \times V_j$ are observed and $(u, v) \in U_i \times V_j$ is one vertex observed at timestep $k$. It follows that either $u$ is not an isolate in $G$ or $v$ is not an isolate in $H$ (or both). Moreover,  $(u,v)$ is observed by some vertex $(w, z)$ where either $w=u$ or $z=v$. We shall assume first that $w=u$. It follows that $z$ is power dominated by $V(H) - \Sigma_j$ as $z\not\in V_j$. Note that some vertex of $N_H(z) \cap V_j$ other than $v$, call it $s$, is not power dominated by $V(H) - \Sigma_j$, otherwise $z$ will eventually observe $v$. Thus, $(u,z)$ has two unobserved neighbors, namely $(u, v)$ and $(u,s)$, and therefore $(u,z)$ cannot observe $(u,v)$. A similar argument can be used if instead we had assumed $z=v$. Thus, no vertex of $U_i \times V_j$ is power dominated by $V(G\Box H) - (\Pi_i \times \Sigma_j)$ and $\{\Pi_i\times \Sigma_j: 1 \le i \le \ell_G, 1 \le j \le \ell_H\}$ is a failed power dominating partition of $G\Box H$. 
\end{proof}

Haynes et al. proved in \cite{electricgrids} that for any tree $T$, $\gamma_P(T) = \spider(T)$. Using this fact and Theorems \ref{thm:treeell} and \ref{cartlowerbound}, we can now provide a lower bound of $\gamma_P(T_1 \Box T_2)$ based on $\gamma_P(T_1)$ and $\gamma_P(T_2)$.

\begin{corollary} For any tree $T$, $\ell_T = \spider(T)$. 
\end{corollary}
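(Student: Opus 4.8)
The plan is to sandwich $\ell_T$ between $\spider(T)$ from both sides. The upper bound is immediate: the paper already observes that $\ell_G \le \gamma_P(G)$ for every graph, and Haynes et al.\ proved $\gamma_P(T) = \spider(T)$ for trees, so $\ell_T \le \gamma_P(T) = \spider(T)$. Thus all the content lies in the reverse inequality $\ell_T \ge \spider(T)$, i.e.\ in exhibiting a failed power dominating partition of $T$ into $\spider(T)$ parts.

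First I would dispose of the degenerate case $\spider(T) = 1$: here a failed power dominating partition (which by definition has at least two parts) would already force $\ell_T \le \gamma_P(T) = 1$, and the default value $\ell_T = 1$ matches $\spider(T) = 1$. So I may assume $\spider(T) \ge 2$ and apply Theorem~\ref{thm:treeell} to obtain a partition $\Pi_1 \cup \cdots \cup \Pi_{\spider(T)}$ in which each $T[\Pi_i]$ is connected and each $\Pi_i$ carries two distinct leaves $w_i, x_i$ whose connecting path $P$ satisfies Condition~(1). Because $T$ is a tree and $T[\Pi_i]$ is connected, the unique $w_ix_i$-path $P$ lies entirely inside $\Pi_i$.

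The key step is to show that, for each $i$, the complement $V(T) - \Pi_i$ does not power dominate $T$; once this holds, $\Pi_1 \cup \cdots \cup \Pi_{\spider(T)}$ is a failed power dominating partition with $\spider(T)$ parts and hence $\ell_T \ge \spider(T)$. Suppose to the contrary that $V(T) - \Pi_i$ power dominates $T$, so every vertex of $P$ is eventually observed. Write $P = v_0 v_1 \cdots v_k$ with $v_0 = w_i$. Since $w_i$ is a leaf, its unique neighbor in $T$ is $v_1$, which is therefore its only neighbor on $P$. Applying Condition~(1) to the observed endpoint $t = v_0$ forces $v_1$ to be not power dominated by $V(T) - \Pi_i$, contradicting the assumption that every vertex of $P$ is observed. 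Hence $V(T) - \Pi_i$ fails to power dominate $T$, as needed.

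I do not expect a serious obstacle: the theorem of the previous section does all the heavy lifting, and the only subtlety is the clean use of the leaf endpoint of $P$. A leaf has exactly one neighbor on $P$, so Condition~(1) collapses to the single forbidden propagation that blocks the complement. One should just be careful that $\spider(T) \ge 2$ guarantees the partition genuinely has at least two parts, so that it qualifies as a failed power dominating partition under the paper's definition; this is precisely why the $\spider(T) = 1$ case is separated out at the start.
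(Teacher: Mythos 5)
Your proposal is correct and follows essentially the same route as the paper: both invoke Theorem~\ref{thm:treeell} to produce the partition into $\spider(T)$ parts and then use Condition~(1) at the leaf endpoints to conclude that no complement $V(T) - \Pi_i$ power dominates $T$, so the partition is a failed power dominating partition and $\ell_T \ge \spider(T)$, with the reverse inequality coming from $\ell_T \le \gamma_P(T) = \spider(T)$. Your explicit treatment of the degenerate case $\spider(T)=1$ and of the requirement that a failed power dominating partition have at least two parts is a small point of care the paper leaves implicit, but it does not change the argument.
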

\begin{proof} By Theorem~\ref{thm:treeell}, there exists a partition $\Pi = \Pi_1 \cup \cdots \cup \Pi_{\spider(T)}$ of $V(T)$ such that for each $i \in [\spider(T)]$, $\Pi_i$ contains two leaves of $T$, say $w_i$ and $x_i$, such that the $w_ix_i$-path $P$ in $T$ satisfies that if $t$ on $P$ is power dominated by $V(T) - \Pi_i$, then both neighbors of $t$ on $P$ are not power dominated by $V(T) - \Pi_i$. Therefore, $w_i$ and $x_i$ will not be power dominated by $V(T) - \Pi_i$ and $\Pi$ is indeed a failed power dominating partition of $T$. 
\end{proof}

Combining Theorem~\ref{cartlowerbound} and the above corollary, we have now proven the following.  

\medskip
\noindent \textbf{Theorem~\ref{cartlowerboundtree}} \emph{For any trees $T_1$ and $T_2$, $\gamma_P(T_1)\gamma_P(T_2) \leq \gamma_P(T_1 \Box T_2)$.} 
\medskip

We also point out that there are other graphs where $\ell_G= \gamma_P(G)$. To see this, we first provide a lower bound on $\ell_G$ based on a specific type of partition.

\begin{observation}\label{obs:sp} Let $G$ be a graph that has a partition $\Pi_1 \cup \cdots \cup \Pi_{k}$ with $k\ge2$ such that for each $i \in[k]$, there exists $U_i \subseteq \Pi_i$ such that $N_G[U_i] \subseteq \Pi_i$ and for each $x \in \Pi_i-U_i$ where $N(x) \cap U_i \ne \emptyset$, $x$ has at least two neighbors in $U_i$. Then $k \le \ell_G$. 
\end{observation}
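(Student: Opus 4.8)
The plan is to show directly that the given partition $\Pi_1\cup\cdots\cup\Pi_k$ is itself a failed power dominating partition of $G$; since it has $k\ge 2$ parts, the definition of $\ell_G$ then immediately yields $k\le\ell_G$. Concretely, I would fix $i\in[k]$, set $S=V(G)-\Pi_i$, and argue that no vertex of the (nonempty) witness set $U_i$ is ever power dominated by $S$, so that $S$ fails to power dominate $G$. Establishing this for every $i$ is exactly what is required.

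First I would dispose of the initialization step. Because $N_G[U_i]\subseteq\Pi_i$, no vertex of $U_i$ lies in $S$, and no vertex of $U_i$ has a neighbor in $S$ (such a neighbor would be a vertex of $N_G[U_i]$ lying outside $\Pi_i$, a contradiction). Hence after the domination step the set of observed vertices is disjoint from $U_i$.

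The heart of the argument is the propagation step, and this is where I expect the only real subtlety. I would suppose toward a contradiction that some vertex of $U_i$ eventually becomes observed, and consider the first time step $t$ at which this occurs, with $u\in U_i$ observed at step $t$ by a forcing vertex $y$ (so $u$ is the unique unobserved neighbor of $y$ immediately before step $t$). Since $y$ is a neighbor of $u\in U_i$, we have $y\in N_G[U_i]\subseteq\Pi_i$; and since $y$ is observed strictly before step $t$ while $t$ is the first step at which any vertex of $U_i$ is observed, $y\notin U_i$. Thus $y\in\Pi_i-U_i$ with $N(y)\cap U_i\ne\emptyset$, so the second hypothesis supplies $y$ with two distinct neighbors $u,u'\in U_i$. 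Both $u$ and $u'$ are still unobserved just before step $t$ (again because $t$ is the first step reaching $U_i$), so $y$ has at least two unobserved neighbors and cannot force at step $t$ — contradicting that $y$ observes $u$.

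This contradiction shows that no vertex of $U_i$ is ever observed, and as $U_i\ne\emptyset$ the set $S=V(G)-\Pi_i$ does not power dominate $G$; running this for every $i\in[k]$ certifies the partition as a failed power dominating partition, so $k\le\ell_G$. The one point deserving explicit care is the nonemptiness of each $U_i$: if some $U_i$ were empty the conclusion would fail (for instance $K_2$ admits empty ``witness'' sets vacuously satisfying both conditions, yet $\ell_{K_2}=1$), so I would state at the outset that each $U_i$ is taken to be nonempty, which is the intended reading making the witness sets meaningful.
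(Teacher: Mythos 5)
Your proof is correct. The paper states Observation~\ref{obs:sp} without proof, and your argument --- isolating the first propagation step at which a vertex of $U_i$ could become observed, noting the forcing vertex $y$ must lie in $\Pi_i - U_i$ with a neighbor in $U_i$, and then using the two-neighbors hypothesis to show $y$ has two unobserved neighbors and so cannot force --- is precisely the intended one; it is the same first-timestep contradiction the authors carry out explicitly in their proof of Theorem~\ref{cartlowerbound}. Your closing caveat is also well taken and worth keeping: as literally written the observation is false if some $U_i$ is empty, since both hypotheses then hold vacuously for any partition (your $K_2$ example shows $k=2$ while $\ell_{K_2}=1$), so nonemptiness of each $U_i$ must be assumed; this is consistent with every use of the observation in the paper, e.g.\ the sets $U_j$ and $U_{s+i}$ constructed in Proposition~\ref{constructcut1} are all nonempty.
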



The following are some examples of graphs for which there exists a partition that satisfies the statement of Observation~\ref{obs:sp} and $k=\ell_G$. Zhao et al. classified all claw-free cubic graphs of order $n$ whose power domination number equals $n/4$ in \cite{ZKC_cubic}. Recall that a diamond is $K_4-e$ and a necklace of diamonds is constructed by taking $k$ disjoint diamonds and adding a matching between the vertices of degree $2$ so that the resulting graph is connected and cubic. Note that if $G$ is a necklace of diamonds, then $\gamma_P(G) = \gamma(G)$. Moreover, if $G$ is a necklace of diamonds and we define $\Pi = \Pi_1 \cup \cdots \cup \Pi_{n/4}$ where each $\Pi_i$ induces a diamond, then $\Pi$ satisfies the statement of Observation~\ref{obs:sp}. Thus, $\ell_G = \gamma_P(G)$. In that same paper \cite{ZKC_cubic}, all connected graphs of order $n$ with power domination number equal to $n/3$ were classified. They constructed a family of graphs $\mathcal F$ in the following way. Let $H$ be any connected graph and write $V(H) = \{u_1, \dots, u_n\}$. Let $G$ be the graph obtained from $H$ by letting $V(G)$ $= V(H) \cup \{u_{i1}, u_{i2}: i \in [n]\}$ and $E(G)$ $= E(H) \cup \{u_iu_{i1}, u_iu_{i2}: i \in [n]\}$ and optionally adding the edge $u_{i1}u_{i2}$ for each $i \in [n]$.

\begin{theorem} \cite{ZKC_cubic} If $G$ is a connected graph of order $n \geq 3$, then $\gamma(G) \leq n/3$ with equality if and only if $G \in \mathcal{F} \cup \{K_{3, 3}\}$.
\end{theorem}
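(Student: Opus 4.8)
The plan is to read this as the Zhao--Kang--Chang extremal result for the \emph{power domination} number: the family $\mathcal F$ just constructed is exactly the family witnessing $\gamma_P(G)=n/3$, so the quantity to be bounded is $\gamma_P$ rather than $\gamma$. (For ordinary domination the bound is false, since $\gamma(P_4)=2>4/3$, which already tells us the argument must genuinely exploit propagation and cannot rest on domination alone.) I would prove the two halves separately: the upper bound $\gamma_P(G)\le n/3$, and the equality characterization.

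For the upper bound I would induct on $n$, the base cases being $n=3$, where $G\in\{P_3,K_3\}$ and $\gamma_P(G)=1$. For the inductive step the aim is to locate a \emph{reducible chunk}: a set $W$ of three vertices at the periphery of $G$ (near an endpoint of a longest path, or around a support vertex) such that one PMU placed in $W$ observes all of $W$ and $G-W$ remains connected of order $n-3$. Given such a chunk, I would take a minimum power dominating set $S'$ of $G-W$ and put $S=S'\cup\{v\}$ for a suitable $v\in W$; then $\gamma_P(G)\le 1+\gamma_P(G-W)\le 1+\tfrac{n-3}{3}=\tfrac{n}{3}$. The content of the step is verifying that $S$ really power dominates $G$, i.e.\ that the observation of $W$ and the propagation inside $G-W$ interlock correctly across the cut.

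For the characterization, the ``if'' direction is where the paper's own machinery pays off. For $G\in\mathcal F$ built from a connected $H$ on $m$ vertices (so $n=3m$), the $m$ gadgets $\Pi_i=\{u_i,u_{i1},u_{i2}\}$ form a failed power dominating partition: removing $\Pi_i$ leaves the private vertices $u_{i1},u_{i2}$ unobserved, since their only shared neighbor $u_i$ always has the two unobserved neighbors $u_{i1},u_{i2}$ and can never force. Hence $m\le \ell_G\le\gamma_P(G)$, and together with the upper bound this forces $\gamma_P(G)=m=n/3$; a direct check (one PMU cannot observe $K_{3,3}$) disposes of the sporadic case. The ``only if'' direction, which is the hard one, I would obtain by tracing equality back through the induction: $\gamma_P(G)=n/3$ forces every reduction to have spent exactly one PMU on exactly three vertices with no propagation savings, which should pin each vertex of a minimum power dominating set into a gadget of the $\{u_i,u_{i1},u_{i2}\}$ shape, glued along the quotient graph $H$, so that $G\in\mathcal F$.

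I expect the obstacles to be twofold. First, propagation is nonlocal, so in the inductive step neither ``$S'$ power dominates $G-W$'' nor ``$v$ observes $W$'' survives automatically once the pieces are reunited inside $G$; producing a chunk whose boundary behaves well under forcing, and proving that such a chunk always exists when $n\ge4$ without leaving tiny leftover components, is the technical heart of the bound. Second, the equality analysis is delicate precisely because of the $K_{3,3}$ exception: the local gadget picture cannot be the whole story, so the characterization must separately corner any extremal graph that is not assembled from gadgets and show it can only be $K_{3,3}$.
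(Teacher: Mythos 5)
First, a point of order: the paper does not prove this statement at all --- it is quoted from Zhao, Kang, and Chang \cite{ZKC_cubic} as background for the corollary that follows it, and as printed it even contains a typo that you correctly detected (the bound is for the power domination number $\gamma_P$, not the domination number $\gamma$; indeed $\gamma(P_4)=2>4/3$). So there is no in-paper proof to compare against, and your proposal must stand on its own as a proof of the ZKC theorem. It does not: it is a strategy with the two hard steps left open, and one of those steps fails outright.

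The fatal gap is the inductive step of the upper bound. Your plan needs, in every connected graph of order $n\ge 4$, a $3$-set $W$ such that one PMU in $W$ observes all of $W$ and $G-W$ is connected (or at least has all components of order $\ge 3$). Such a $W$ need not exist. Let $G$ be the spider obtained from $K_{1,k}$, $k\ge 3$, by subdividing every edge once: center $c$ and legs $c x_i y_i$. A PMU at $c$ observes everything, but any $W$ containing $c$ leaves at least two full legs as separate components of order $2$; a PMU at $x_i$ or $y_i$ observes only $\{y_i,x_i,c\}$, so a $W$ avoiding $c$ that is observable by one of its own vertices must lie inside $\{y_i,x_i\}$, which is too small. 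Since $\gamma_P(P_2)=1>2/3$, applying the bound componentwise is also impossible (it would give $\approx k$ while $\gamma_P(G)=1$), so the induction cannot be started on this graph at all; the published proof of the $n/3$ bound is necessarily structured differently. Note also that the obstacle you flagged as the ``technical heart'' --- the interlocking of propagation across the cut --- is actually the easy part: if $W\subseteq N_G[v]$, then $W$ is observed at the initialization step, and any forcing valid in $G-W$ for a power dominating set $S'$ remains valid in $G$, so $\gamma_P(G)\le 1+\gamma_P(G-W)$ does hold; it is the \emph{existence} of such a $W$ preserving connectivity that breaks. Your ``only if'' direction is likewise not a proof: once the induction is gone, ``tracing equality back through it'' has no content, and as you yourself observe, $K_{3,3}$ shows the extremal graphs are not all gadget-assembled, so a separate argument is required there. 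What is correct --- and genuinely in the spirit of this paper --- is your ``if'' direction: for $G\in\mathcal F$ the gadgets $\{u_i,u_{i1},u_{i2}\}$ form a failed power dominating partition, giving $n/3\le \ell_G\le \gamma_P(G)$, which together with the upper bound forces equality; this is exactly the paper's own corollary that $\ell_G=\gamma_P(G)$ for $G\in\mathcal F$.
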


\begin{corollary} For each $G \in \mathcal{F}$, $\ell_G = \gamma_P(G)$. 
\end{corollary}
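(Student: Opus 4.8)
The plan is to exhibit, for each $G\in\mathcal F$, an explicit partition of $V(G)$ into $\gamma_P(G)$ parts that is a failed power dominating partition; combined with the general inequality $\ell_G\le\gamma_P(G)$ this forces equality. Write $G$ as in the construction of $\mathcal F$: it is built from a connected graph $H$ on vertices $u_1,\dots,u_n$ by attaching to each $u_i$ two new vertices $u_{i1},u_{i2}$, with the edge $u_{i1}u_{i2}$ possibly present. The natural partition is $\Pi_i=\{u_i,u_{i1},u_{i2}\}$ for $i\in[n]$, since the three vertices associated with a given $u_i$ interact with the rest of $G$ only through $u_i$.

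First I would dispose of the degenerate case $n=1$, where $G$ is $P_3$ or $K_3$; here $\gamma_P(G)=1$, and since $\ell_G\le\gamma_P(G)=1$ the claim holds trivially. For $n\ge 2$ the partition above has $k=n\ge 2$ parts, so Observation~\ref{obs:sp} is available, and I would apply it with $U_i=\{u_{i1},u_{i2}\}$. The two hypotheses to check are immediate and hold uniformly whether or not the optional edge $u_{i1}u_{i2}$ is present: the only neighbors of $u_{i1}$ and $u_{i2}$ are $u_i$ and (possibly) each other, so $N_G[U_i]=\{u_i,u_{i1},u_{i2}\}=\Pi_i\subseteq\Pi_i$; and the single vertex of $\Pi_i-U_i$, namely $u_i$, has both $u_{i1}$ and $u_{i2}$ as neighbors in $U_i$, i.e.\ at least two. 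Observation~\ref{obs:sp} then yields $\ell_G\ge n$.

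To close the sandwich I would bound $\gamma_P(G)$ from above by an explicit dominating set. The set $\{u_1,\dots,u_n\}$ dominates $G$, since each $u_{ij}$ is adjacent to $u_i$ and the $u_i$ cover $V(H)$; hence $\gamma_P(G)\le\gamma(G)\le n$. Combining the two estimates gives $n\le\ell_G\le\gamma_P(G)\le n$, whence $\ell_G=\gamma_P(G)$ (and, as a byproduct, $\gamma_P(G)=n$).

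There is no serious obstacle here; the argument is a short sandwich. The only points requiring care are confirming that the verification of the hypotheses of Observation~\ref{obs:sp} is genuinely uniform across the two variants of the construction (adding the optional edges changes the degrees of $u_{i1}$ and $u_{i2}$ but not the containment $N_G[U_i]\subseteq\Pi_i$), and treating the small case $n=1$ separately, where $k\ge 2$ fails so Observation~\ref{obs:sp} cannot be invoked directly.
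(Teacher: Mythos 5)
Your proposal is correct and takes essentially the same approach as the paper: both use the identical partition $\Pi_i = \{u_i, u_{i1}, u_{i2}\}$, the paper verifying directly that $V(G)-\Pi_i$ fails to power dominate $u_{i1}$ and $u_{i2}$, while you route the same verification through Observation~\ref{obs:sp} with $U_i=\{u_{i1},u_{i2}\}$. Your version is slightly more self-contained, since you also supply the upper bound $\gamma_P(G)\le\gamma(G)\le n$ and handle the $n=1$ case where the observation's $k\ge 2$ hypothesis fails, whereas the paper leaves the equality $\gamma_P(G)=n$ to the cited classification theorem of Zhao et al.
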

\begin{proof} Let $G$ $\in \mathcal{F}$ be constructed from $H$ with $V(H) = \{u_1, \dots, u_n\}$ as described above. Let $\Pi = \Pi_1 \cup \cdots \cup \Pi_{\gamma_P(H')}$ where $\Pi_i = \{u_i, u_{i1}, u_{i2}\}$ for each $i \in [n]$. Note that $V(G)- \Pi_i$ does not power dominate $u_{i1}$ or $u_{i2}$ for each $i \in [n]$. Thus, $\Pi$ is a failed power dominating partition. 
\end{proof}

  \subsection{Connections to zero forcing} \label{sec:zf}

As noted earlier, there is significant interaction between power dominating sets and zero forcing sets of a graph. In fact, many of the results of this  section can be translated into results about the zero forcing number. Given any graph $G$ and any partition $\Pi = \Pi_1 \cup \cdots \cup \Pi_k$ of $V(G)$ where $k\ge2$, we say $\Pi$ is a failed zero forcing partition of $G$ if for each $i\in [k]$, $V(G) - \Pi_i$ does not zero force $G$. If such a partition exists, we let $z_G= \max\{j: \Pi_1 \cup \cdots \cup \Pi_j \text{ is a failed zero forcing partition of $G$}\}$. If no such partition exists, we let $z_G = 1$. If $V(G) - \Pi_i$ does not power dominate $G$, then $V(G) - \Pi_i$ does not zero force $G$. In addition, since $V(G) - \Pi_i$ does not zero force $G$, any zero forcing set of $G$ must contain at least one vertex from each $\Pi$. Thus, $\ell_G \leq z_G \leq Z(G)$. Using similar techniques to those used in Theorem \ref{cartlowerbound}, we can provide a lower bound for the zero forcing number of the Cartesian product of two graphs.

\begin{theorem} For any graphs $G$ and $H$, $\ell_G \ell_H \leq z_Gz_H \leq Z(G \Box H)$. If $z_G = Z(G)$ and $z_H = Z(H)$, then $Z(G)Z(H) \leq Z(G \Box H)$.
\end{theorem}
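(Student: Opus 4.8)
The plan is to mirror the proof of Theorem~\ref{cartlowerbound} almost verbatim, replacing ``power dominated'' throughout by ``zero forced'' and deleting every appeal to the initialization (domination) step, since zero forcing uses only the propagation step. The first inequality $\ell_G\ell_H \le z_Gz_H$ needs no new work: the excerpt already records that a set failing to power dominate also fails to zero force, so $\ell_G \le z_G$ and $\ell_H \le z_H$, and multiplying gives the claim. The substance is therefore the middle inequality $z_Gz_H \le Z(G\Box H)$, and the final ``If'' clause is just the special case $z_G=Z(G)$, $z_H=Z(H)$ substituted into that inequality.

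For the middle inequality I would fix failed zero forcing partitions $\Pi = \Pi_1\cup\cdots\cup\Pi_{z_G}$ of $G$ and $\Sigma = \Sigma_1\cup\cdots\cup\Sigma_{z_H}$ of $H$ realizing $z_G$ and $z_H$, and claim that $\{\Pi_i\times\Sigma_j : 1\le i\le z_G,\ 1\le j\le z_H\}$ is a failed zero forcing partition of $G\Box H$; this yields $z_Gz_H \le z_{G\Box H}\le Z(G\Box H)$. As in Theorem~\ref{cartlowerbound}, for each $i$ let $U_i\subseteq\Pi_i$ be the nonempty set of vertices of $\Pi_i$ not zero forced by $V(G)-\Pi_i$, and for each $j$ let $V_j\subseteq\Sigma_j$ be the vertices of $\Sigma_j$ not zero forced by $V(H)-\Sigma_j$. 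It then suffices to show that for fixed $i,j$ no vertex of $U_i\times V_j$ is zero forced by $V(G\Box H)-(\Pi_i\times\Sigma_j)$. Assuming otherwise, I let $k$ be the first timestep at which some $(u,v)\in U_i\times V_j$ is forced and let $(w,z)$ be the vertex forcing it, so either $w=u$ with $z\in N_H(v)$, or $z=v$ with $w\in N_G(u)$.

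I would then run the same dichotomy. In the case $w=u$: because $(u,z)$ is observed strictly before step $k$ and $k$ is the first step at which a vertex of $U_i\times V_j$ appears, we get $z\notin V_j$, hence $z$ is zero forced by $V(H)-\Sigma_j$; running the zero forcing process on $H$ to termination, $z$ is observed while its neighbor $v$ is not, so $z$ must retain a second unobserved neighbor $s\in N_H(z)\cap V_j$ with $s\ne v$ (otherwise $z$ would force $v$). Then $(u,s)\in U_i\times V_j$ is a neighbor of $(u,z)$ which, being in $U_i\times V_j$, is not observed before step $k$; thus $(u,z)$ has the two unobserved neighbors $(u,v)$ and $(u,s)$ and cannot force $(u,v)$, a contradiction. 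The case $z=v$ is symmetric, using $U_i$ and the process on $G$.

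The main obstacle is the bookkeeping around ``first timestep'' together with the simultaneity of the forcing rule: I must argue that the witnessing neighbor $s$, produced from the \emph{terminal} state of the single-factor process, is genuinely still unobserved in $G\Box H$ at step $k$, which is precisely where membership of $(u,s)$ in $U_i\times V_j$ and minimality of $k$ are used. Two minor points remain. The degenerate cases $z_G=1$ or $z_H=1$ (where the product has fewer than two parts) should be dispatched separately: when $z_Gz_H=1$ the bound is trivial, and when exactly one factor has value $1$ one treats the all-of-$G$ block as a single part with $U_1=V(G)$, so that the argument still applies and the horizontal-force case simply cannot arise. Finally, deleting the domination step removes the need for the ``isolate'' clause appearing in Theorem~\ref{cartlowerbound}, since here the mere existence of the forcing vertex $(w,z)$ already guarantees $(u,v)$ has a neighbor.
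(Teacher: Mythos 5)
Your proposal is correct and matches the paper's approach exactly: the paper gives no separate proof for this theorem, stating only that it follows ``using similar techniques to those used in Theorem~\ref{cartlowerbound},'' which is precisely the translation you carry out (product of failed zero forcing partitions, minimal timestep $k$, and the second unobserved neighbor $s$ drawn from the terminal state of the single-factor process). Your added care with the degenerate cases $z_G=1$ or $z_H=1$ and the removal of the isolate clause are sound refinements of that same argument.
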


\section{Power Domination and Cut-sets}
In this section we introduce bounds which can be used when trying to determine the power domination number for graphs with cut-sets (or vertex-cuts) of arbitrarily large size.  Specifically, for a given cut-set we consider the power domination number of the components (as well as the power domination number of the closed neighborhoods of the components) yielded by removing the cut-set from the graph to establish both an upper bound and a lower bound for the power domination number of the entire graph in terms of the power domination numbers of these components.  We then show that the bounds are tight by identifying classes of graphs which attain these maximum and minimum values.  The graphs providing witness to the tightness of the bounds are additional classes for which the failed power dominating partition number and power domination number are equal. We note that the lower bound in the following result is similar to that given in \cite{HS-thesis} in the context of zero forcing.

\begin{theorem}\label{thm:lowercut}
Let $G$ be a graph, $C$ a cut-set of $G$, and write $G-C = H_1 \cup \cdots \cup H_m$ where each $H_i$ is a component of $G-C$. Furthermore, let $K_i$ be the graph induced by $V(H_i) \cup C$.  Then
\[\sum_{i=1}^m\gamma_P(K_i)-(m-1)\abs{C} \leq \gamma_P(G) \leq \sum_{i=1}^m\gamma_P(H_i)+\abs{C}.\]
\end{theorem}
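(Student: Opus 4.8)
The plan is to prove the two inequalities separately, in each case passing between power dominating sets of $G$ and of the pieces by tracking how the observation process behaves relative to the cut-set $C$. The single structural fact I will use throughout is that, because $C$ separates the $H_i$, every neighbour of a vertex of $H_i$ lies in $V(H_i)\cup C=V(K_i)$. Consequently, any force that observes a vertex of $H_i$ must be performed by a vertex of $K_i$, and the only way observation can pass between two different pieces is through $C$.

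For the upper bound I would fix a minimum power dominating set $S_i$ of each $H_i$ and put $S=C\cup\bigcup_{i=1}^m S_i$, so that $\abs{S}=\abs{C}+\sum_{i=1}^m\gamma_P(H_i)$. After the domination step every vertex of $C$ is observed, and I claim each $V(H_i)$ is then fully observed. This I would verify by comparing the $G$-process with the process in $H_i$ started from $S_i$: running through the forces of the $H_i$-process in order, each can be replayed in $G$, since the extra neighbours a vertex of $H_i$ acquires in $G$ all lie in $C$ and are therefore already observed, so they never block a force. A short induction on the $H_i$-forcing order gives $V(H_i)\subseteq$ (the set observed by $S$), and together with $C$ this is all of $G$.

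For the lower bound I would start from a minimum power dominating set $D$ of $G$. The basic building block is the dual statement: for each $i$, the set $(D\cap V(H_i))\cup C$ power dominates $K_i$, proved by the same replay argument in the other direction (with all of $C$ placed in the set, every $G$-force observing a vertex of $H_i$ reproduces inside $K_i$). This gives $\gamma_P(K_i)\le\abs{D\cap V(H_i)}+\abs{C}$, and since $\sum_i\abs{D\cap V(H_i)}=\abs D-\abs{D\cap C}$ it already yields the weaker bound $\sum_i\gamma_P(K_i)-m\abs{C}\le\gamma_P(G)$. To recover the sharp constant $m-1$ I would not seed all of $C$ into every piece. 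Letting $R_i$ be the set power dominated by $D\cap V(K_i)$ inside $K_i$, I seed into $K_i$ only the cut-vertices it misses, $Z_i=(C\setminus D)\setminus R_i$; the building block (applied with these extra seeds, which force all of $C$ to be observed in $K_i$) shows $(D\cap V(K_i))\cup Z_i$ still power dominates $K_i$. Using $\sum_i\abs{D\cap V(K_i)}=\abs D+(m-1)\abs{D\cap C}$ and $\sum_i\abs{Z_i}=\sum_{c\in C\setminus D}\abs{\{i:c\notin R_i\}}$, the target inequality follows once each $c\in C\setminus D$ lies in $R_i$ for at least one $i$, as each such $c$ then contributes at most $m-1$ to the double count, giving $\sum_i\gamma_P(K_i)\le\abs D+(m-1)\abs C$.

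The main obstacle is precisely this last claim: for a \emph{minimum} power dominating set $D$, every cut-vertex is power dominated, within a single piece $K_i$, by $D\cap V(K_i)$ alone. The claim is false for arbitrary power dominating sets — one can arrange a cut-vertex whose observation in $G$ is triggered only after a second cut-vertex has been observed through a different piece, so that no single piece power dominates it on its own — so minimality must genuinely be used. I expect to prove it by an exchange argument: if some cut-vertex were observed in $G$ only through the cooperation of several pieces and by none of them alone, then rerouting the relevant portion of the observation process produces a power dominating set of smaller order, contradicting minimality of $D$. Carrying this out cleanly, and in particular handling cut-vertices that force one another, is where the real work lies; granting it, both displayed inequalities follow from the book-keeping above.
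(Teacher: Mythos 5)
Your upper bound is the paper's argument verbatim, and the intermediate step of your lower bound is sound: $(D\cap V(K_i))\cup Z_i$ does power dominate $K_i$ (a replay induction makes this rigorous). The genuine gap is your final claim, and the problem is not merely that the exchange argument is unfinished --- the claim itself is false. Minimality of $D$ does \emph{not} guarantee that every $c\in C\setminus D$ is power dominated inside some single piece by $D\cap V(K_i)$ alone. Take $G=P_8$ with vertices $a_1,a_2,c_1,b_1,b_2,c_2,d_1,d_2$ in path order and $C=\{c_1,c_2\}$, so $m=3$ with components $\{a_1,a_2\}$, $\{b_1,b_2\}$, $\{d_1,d_2\}$. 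The set $D=\{a_1\}$ is a minimum power dominating set (any single vertex power dominates a path). Then $D\cap V(K_2)=D\cap V(K_3)=\emptyset$, so $R_2=R_3=\emptyset$, and in $K_1$ the vertex $c_2$ is isolated (its neighbors $b_2,d_1$ are deleted and $c_1c_2\notin E(G)$), so $R_1=\{a_1,a_2,c_1\}$. Hence $c_2\notin R_1\cup R_2\cup R_3$: your double count charges $c_2$ fully $m=3$ times, and your bookkeeping yields $\sum_i\bigl(|D\cap V(K_i)|+|Z_i|\bigr)=6$, while $|D|+(m-1)|C|=5$, which here equals $\sum_i\gamma_P(K_i)$ exactly. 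Since $D$ is already minimum, no rerouting can produce a smaller power dominating set, so the contradiction you hope to extract from minimality does not exist. (A different minimum set, e.g.\ $D=\{b_1\}$, happens to work in this example, but then you would need an existence theorem for a well-chosen minimum power dominating set, which is a separate unproven --- and far from obvious --- statement.)

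The repair, and this is where the paper's proof diverges from yours, is to weaken what is demanded of each piece: you do not need some $K_i$ to observe $c$ using $D\cap V(K_i)$ \emph{alone}; you only need, for each $c\in C\setminus D$, one piece into which $c$ need not be seeded. The paper runs the global process on $G$ once and classifies each cut-vertex of $C-B$ by its \emph{observer}: if $c$ is observed by a vertex of $H_{i'}$, then $c$ is seeded into every $K_i$ with $i\neq i'$ ($m-1$ copies) but not into $K_{i'}$; a time-step induction (``if $w\in V(K_i)$ is observed in $G$ by step $k$, then it is observed in $K_i$ by step $k$'') shows the replay inside $K_{i'}$, \emph{helped by the other seeded cut-vertices}, re-observes $c$. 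In the $P_8$ example this is exactly what happens: $c_2$ is forced by $b_2\in H_2$, so it is not seeded into $K_2$, and inside $K_2$ the seed $c_1$ forces $b_1$, then $b_2$, then $c_2$. Your $R_i$ cannot capture this because it is computed with no cut-vertex seeds at all, so it misses observations of cut-vertices that are enabled by other cut-vertices; the cure is bookkeeping by observer, not minimality. One caveat if you rewrite along these lines: a cut-vertex can also be forced by another cut-vertex outside $D$ (e.g.\ $P_6$ with $C$ two adjacent middle vertices), a case the paper's own $Y$/$C_i$ classification handles awkwardly; defining the exception class as ``observed by a vertex of $C$'' keeps the charge per cut-vertex at $m-1$ and the replay induction intact.
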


\begin{proof}
Note that if we let $B_i$ be a power dominating set for $H_i$ for $1 \le i \le m$, then $C \cup \bigcup_{i=1}^m B_i$ is a power dominating set of $G$. This establishes the upper bound. To show the lower bound holds, let $B$ be a power dominating set of $G$ and let $B_i = V(H_i)\cap B$ for each $1 \le i \le m$. Furthermore, we let $Y$ represent the vertices in $C-B$ that are observed by a vertex in $C\cap B$, and for each $i \in [m]$ we let $C_i$ represent the vertices in $C-B - Y$ that are observed by some vertex in $H_i$. For each $i\in [m]$, we claim that $B_i \cup (C- Y - C_i)$ is a power dominating set of $K_i$. To see this, we claim that if vertex $w$ in $V(K_i)$ is observed with respect to $B$ by time step $k$, then $w$ is observed in $K_i$ with respect to $B_i \cup (C - Y - C_i)$ by time step $k$. It is clear that this is the case for the initialization step for if $w \in N_G[B]$, then $w \in N_{K_i}[B_i \cup (C - Y - C_i)]$. So we shall assume for some $j \in \mathbb{N}$ that if $w \in V(K_i)$ is observed with respect to $B$ by some time step $j$, then $w$ is observed in $K_i$ with respect to $B_i \cup (C - Y - C_i)$ by time step $j$. Now let $y$ be some vertex in $ V(K_i)$ that is observed at time step $j+1$. Thus, in $G$ it is observed by some vertex $z$ at which point $N_G(z) - \{y\}$ consists of observed vertices. If $z \not\in V(K_i)$, then $y \in C$ and $y \in B_i \cup (C - Y - C_i)$. Thus, we may assume $z \in V(K_i)$. Each vertex of $N_G(z) - \{y\}$ is either (1) not in $K_i$ or (2) already observed by time step $j$. It follows that $y$ will still be observed by time step $j+1$. Thus, $B_i \cup (C - Y - C_i)$ is indeed a power dominating set of $K_i$. It follows that 
\begin{eqnarray*}
\sum_{i=1}^m\gamma_P(K_i) &\le& \sum_{i=1}^m|B_i \cup (C - Y - C_i)|\\
&=& \sum_{i=1}^m|B_i| + \sum_{i=1}^m|C \cap B| + \sum_{i=1}^m|C-B-Y - C_i|\\
&=& \gamma_P(G) + \sum_{i=1}^{m-1}|C\cap B| + \sum_{i=1}^m |C-B-Y - C_i|\\
&\le& \gamma_P(G) + (m-1)|C|,
\end{eqnarray*}
where the last inequality holds since we have counted each vertex of $C_i$ $m-1$ times in the previous summation. 

\end{proof}

Note that Theorem~\ref{thm:lowercut} can be stated more generally in that we could have instead assumed that $\{{\rm{comp}}(H_i)\}_{i=1}^m$ is a partition of ${\rm{comp}}(G-C)$ where ${\rm{comp}}(G)$ is used to denote the components of a graph $G$. Having established the bounds we now construct a class of graphs which witness the tightness of both bounds, and thus that the upper bound and lower bound are in fact sometimes equal. In fact, we will show that we can partition the vertices of these graphs into a failed power dominating partition such that $\ell_G = \gamma_P(G)$.

\begin{proposition}\label{constructcut1}
For each pair $m,s \in \mathbb N$ with $m \geq 2$ and $s \geq 1$ there is a graph $G_{m,s}$ such that there exists a cut-set $C$ of $G_{m,s}$ of size $s$ where $G_{m,s} - C =  \cup_{i=1}^m H_i$ with each $H_i$ a  component of $G_{m,s}-C$, $K_i$ is the graph induced by $V(H_i)\cup C$,  and 
\[\ell_{G_{m,s}} = \gamma_P(G_{m,s})=\sum_{i=1}^m \gamma_P(H_i)+\abs{C}=\sum_{i=1}^m \gamma_P(K_i)-(m-1)\abs{C}\]
\end{proposition}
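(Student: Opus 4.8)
The plan is to exhibit an explicit family $G_{m,s}$ and then read off all four quantities rather than computing $\gamma_P(G_{m,s})$ directly. For each $i \in [m]$ I would take a spider $H_i$ with center $r_i$ carrying two pendant leaves $\alpha_i,\beta_i$ together with $s$ pendant paths of length two, $r_i\,g_{i,j}\,h_{i,j}$ for $j\in[s]$. The cut-set is $C=\{c_1,\dots,c_s\}$, and I join each $c_j$ to $g_{i,j}$ for every $i\in[m]$. Deleting $C$ leaves exactly the $m$ spiders $H_1,\dots,H_m$, so $C$ is a cut-set of size $s$, each $H_i$ is a single component, and $K_i$ is the tree obtained from $H_i$ by attaching a single pendant vertex $c_j$ at each $g_{i,j}$.

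The core of the argument is two local computations. Since $H_i$ is a spider, $\gamma_P(H_i)=\spider(H_i)=1$. In $K_i$ the attached $c_j$ is a pendant neighbor of $g_{i,j}$, so $g_{i,j}$ is promoted from an ordinary support vertex of $H_i$ to a strong support vertex of $K_i$ with the two leaves $h_{i,j}$ and $c_j$; together with the strong support vertex $r_i$ (leaves $\alpha_i,\beta_i$) this produces $1+s$ pairwise-disjoint strong-support blocks, each of which must meet every power dominating set, giving $\gamma_P(K_i)=1+s$. With these two values, $\sum_i \gamma_P(H_i)+\abs{C}=m+s$ and $\sum_i\gamma_P(K_i)-(m-1)\abs{C}=m(1+s)-(m-1)s=m+s$ coincide, so Theorem~\ref{thm:lowercut} squeezes $\gamma_P(G_{m,s})$ between two equal bounds and forces $\gamma_P(G_{m,s})=m+s$.

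It remains to show $\ell_{G_{m,s}}=\gamma_P(G_{m,s})$; since $\ell_G\le\gamma_P(G)$ always holds, I only need a failed power dominating partition into $m+s$ parts, which I would produce through Observation~\ref{obs:sp}. Take the $m$ center blocks $\{r_i,\alpha_i,\beta_i\}$ (with $U=\{\alpha_i,\beta_i\}$) and the $s$ cut blocks $\{c_j\}\cup\{g_{i,j},h_{i,j}:i\in[m]\}$ (with $U=\{c_j\}\cup\{h_{i,j}:i\in[m]\}$). One checks that $N_G[U]$ lies inside its block and that every other vertex of the block adjacent to $U$ (namely $r_i$, respectively each $g_{i,j}$) has exactly two neighbors in $U$. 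These $m+s$ sets partition $V(G_{m,s})$, so Observation~\ref{obs:sp} gives $\ell_{G_{m,s}}\ge m+s$, completing the chain of equalities.

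The delicate point, and the thing that dictates the whole construction, is engineering $\gamma_P(K_i)-\gamma_P(H_i)=\abs{C}$ exactly so that the upper and lower bounds of Theorem~\ref{thm:lowercut} collide. Naively gluing a cut vertex onto a component tends either to leave $\gamma_P(K_i)$ unchanged (the cut vertex is dominated or forced by whatever PMU already serves the component) or to shift the count by the wrong amount. The fix is twofold: attaching $c_j$ to a support vertex $g_{i,j}$ whose single private leaf $h_{i,j}$ leaves it one short of being strong inside $H_i$, so that the pendant $c_j$ tips it into a strong support vertex in $K_i$; and retaining an independent strong support vertex $r_i$ in each component so that the component's own PMU is not refunded when the $g_{i,j}$ blocks are served. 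Verifying the lower bound $\gamma_P(K_i)\ge 1+s$ via the disjoint-block argument is where I would be most careful.
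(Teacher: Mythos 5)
Your proposal is correct and follows essentially the same blueprint as the paper's proof: an explicit family in which each $H_i$ is a spider (so $\gamma_P(H_i)=1$), the cut vertices are attached along the legs so that $\gamma_P(K_i)=s+1$ (which makes both sums equal $m+s$), and a failed power dominating partition built from strong-support-style blocks verifying the hypotheses of Observation~\ref{obs:sp} gives $\ell_{G_{m,s}} \geq m+s$. The only differences are cosmetic: your spiders have two short legs and your $c_j$ attaches only to the support vertices $g_{i,j}$ (the paper subdivides $K_{1,s+2}$ and attaches $d_j$ to both the leaf and its support vertex), and you obtain $\gamma_P(G_{m,s}) \geq m+s$ by squeezing with the lower bound of Theorem~\ref{thm:lowercut} rather than via $\ell_{G_{m,s}} \leq \gamma_P(G_{m,s})$, which the partition already supplies.
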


\begin{proof}
Fix $m, s \in\mathbb{N}$ where $m \ge 2$ and $s \ge 1$. For each $i\in [m]$, let $H_i$ be the graph obtained from $K_{1, s+2}$ by subdividing each edge once. Let $v_i$ be the center of $H_i$ and let $x^i_1, \dots, x^i_{s+2}$ represent the leaves in $H_i$. We construct $G_{m,s}$ from the disjoint union of $\cup_{i=1}^mH_i$ and a set of vertices $\{d_j\}_{j=1}^s$ by adding each edge of the form $d_ju$ where $u = x^i_j$ for each $i \in [m]$ or $u$ is the support vertex of $x_j^i$ in $H_i$ for each $i\in [m]$. Note that $\gamma_P(G_{m, s}) \le m+s$ as $\{v_i\}_{i=1}^m \cup \{d_j\}_{j=1}^s$ is a power dominating set of $G_{m, s}$. To see that $m+s \le \gamma_P(G_{m, s})$, we construct a partition of $V(G_{m, s})$ that satisfies the statement of Observation~\ref{obs:sp}. For each $j \in [s]$, let $\Pi_j = N_{G_{m, s}}[d_j]$. For each $i \in \{1, \dots, m\}$, let $\Pi_{s+i} = \{v_i, x_{s+1}^i, x_{s+2}^i, y_{s+1}^i, y_{s+2}^i\}$ where $y_{s+\alpha}^i$ is the support vertex of $x_{s+\alpha}^i$ in $H_i$ for $\alpha\in[2]$. Certainly $\Pi = \Pi_1 \cup \cdots \cup \Pi_s \cup \Pi_{s+1}\cup \cdots \cup \Pi_{s+m}$ is a partition of $V(G_{m, s})$. For each $j \in [s]$, set $U_j = \{d_j\} \cup \{x_j^i: i \in [m]\}$. Note that every vertex in $\Pi_j - U_j$ is adjacent to two vertices in $U_j$. For each $i \in [m]$, set $U_{s+i} = \Pi_{s+i} - \{v_i\}$. Thus, $v_i$ is adjacent to exactly two vertices in $U_{s+i}$. It follows that $\Pi$ satisfies the statement of Observation~\ref{obs:sp} and is a failed power dominating partition.
Finally, note that for each $i\in [m]$, $\gamma_P(K_i) = s+1$. It follows that $m+s = \sum_{i=1}^m \gamma_P(K_i)-(m-1)\abs{C}$.
\end{proof}

We now provide a generalization of the upper bound in Theorem~\ref{thm:lowercut}.

\begin{observation}\label{thm:cutgen}
Let $G$ be a graph and $C$ a cut-set of $G$ of size $s$, with $G-C = H_1 \cup \cdots \cup H_m$ where each $H_i$ is a component of $G-C$.  In addition, for each $i \in [m]$, let $C_i$ be any subset of $C$ and let $L_i$ be the graph induced by $V(H_i) \cup C_i$. Then
\[\gamma_P(G) \leq \sum_{i=1}^m\gamma_P(L_i)+s.\]
\end{observation}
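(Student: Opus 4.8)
The plan is to exhibit an explicit power dominating set of $G$ whose size is at most $\sum_{i=1}^m \gamma_P(L_i) + s$, mirroring the construction used for the upper bound in Theorem~\ref{thm:lowercut}. For each $i \in [m]$, I would let $B_i$ be a minimum power dominating set of $L_i$, so that $|B_i| = \gamma_P(L_i)$, and set $S = C \cup \bigcup_{i=1}^m B_i$. Since each $B_i \subseteq V(L_i) = V(H_i) \cup C_i$ with $C_i \subseteq C$, the vertices of $B_i$ lying in $C$ are already counted in $C$, and hence $|S| \le |C| + \sum_{i=1}^m |B_i| = s + \sum_{i=1}^m \gamma_P(L_i)$. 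It then remains only to argue that $S$ is a power dominating set of $G$.

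To show that $S$ power dominates $G$, I would run the observation process in $G$ from $S$ and compare it, one component at a time, to the process in $L_i$ from $B_i$. Because $C \subseteq S$, every vertex of $C$ is observed in $G$ already at the initialization step, and this is the feature that drives the whole argument. I would then prove by induction on the time step $k$ the following monotonicity statement: every vertex of $V(H_i)$ observed in $L_i$ (starting from $B_i$) by step $k$ is also observed in $G$ (starting from $S$) by step $k$. The base case is initialization: if $w \in V(H_i)$ is dominated in $L_i$ by some $b \in B_i$, then $b \in S$ and $wb \in E(G)$ since $L_i$ is an induced subgraph, so $w \in N_G[S]$.

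For the inductive step, suppose $w \in V(H_i)$ is forced in $L_i$ at step $k+1$ by a vertex $z$ having $w$ as its unique unobserved $L_i$-neighbor. The crucial point, which I expect to be the main obstacle, is the case $z \in C_i$: a vertex of $C$ may have neighbors spread across several components $H_j$, so $z$ need not be able to perform this same force in $G$. The resolution is that this case never actually calls for a force in $G$: since $z \in C_i \subseteq C \subseteq S$ and $w \in N_G(z)$, the vertex $w$ is already observed in $G$ at initialization. In the remaining case $z \in V(H_i)$, the induction hypothesis gives that $z$ and all of its $V(H_i)$-neighbors in $L_i$ other than $w$ are observed in $G$ by step $k$; as $C$ is a cut-set, the only other neighbors of $z$ in $G$ lie in $C$ and are therefore already observed, so $w$ is either observed or is the unique unobserved neighbor of $z$ and gets forced at step $k+1$.

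Finally, because $B_i$ is a power dominating set of $L_i$, every vertex of $V(L_i) \supseteq V(H_i)$ is eventually observed in the $L_i$-process, so the monotonicity statement forces all of $V(H_i)$ to be observed in $G$. Combined with the observation of all of $C$ at initialization, this covers $V(G) = C \cup \bigcup_{i=1}^m V(H_i)$, which shows that $S$ power dominates $G$ and yields $\gamma_P(G) \le |S| \le \sum_{i=1}^m \gamma_P(L_i) + s$, completing the argument.
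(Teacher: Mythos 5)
Your proposal is correct and takes essentially the same approach as the paper: the paper's proof also sets $S = C \cup \bigcup_{i=1}^m B_i$ with each $B_i$ a minimum power dominating set of $L_i$ and simply asserts that one can easily verify $S$ power dominates $G$. Your step-by-step induction (in particular, noting that a force performed by a vertex of $C_i$ in $L_i$ never needs to be replicated in $G$ because all of $C$ is observed at initialization) is exactly the verification the paper leaves to the reader.
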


\begin{proof}
For each $i \in [m]=\{1,2,3...,m\}$, let $B_i$ be a minimum power dominating set of $L_i$. One can easily verify that $C \cup \bigcup_{i=1}^m B_i$ is a power dominating set of $G$.
\end{proof}

 One can easily construct a class of graphs similar to those in Proposition~\ref{constructcut1} that will show the upper bound given in Observation~\ref{thm:cutgen} is sharp. Perhaps a more interesting question is are there graphs for which the upper bound provided in Observation  \ref{thm:cutgen} is strictly less than the upper bound provided in Theorem~\ref{thm:lowercut}? The answer is yes. Indeed, consider a graph constructed as follows. For $i\in [2]$, let $H_i$ be the graph obtained from two disjoint copies of $K_{1, n}$ for $n \ge 3$ by adding an edge between the centers of each $K_{1, n}$. Construct $G$ from $H_1 \cup H_2 \cup \{x_1, x_2\}$ whereby each $x_i$ for $i \in [2]$ is adjacent to all vertices of $H_1 \cup H_2$. $\{x_1, x_2\}$ is certainly a cut-set of $G$ and if we let $C_1 = \{x_1\}$ and $C_2 = \{x_2\}$, then $L_1$ is the graph induced by $V(H_1) \cup \{x_1\}$ and $L_2$ is the graph induced by $V(H_2) \cup \{x_2\}$. Thus, $\gamma_P(L_i) =1$ for $i\in [2]$. It follows that the upper bound in Observation~\ref{thm:cutgen} yields $\sum_{i=1}^2\gamma_P(L_i) + 2 = 4$. On the other hand, the upper bound in Theorem~\ref{thm:lowercut} gives us $\sum_{i=1}^2\gamma_P(H_i) + 2 = 6$. 

\section{Conclusions}
In this paper, we provided new lower bounds on the power domination number of the Cartesian product of two graphs. Unlike previous lower bounds, this new bound depends on both factors. In addition, for graphs in certain classes, including trees, we prove that a Vizing-like bound holds with respect to the power domination number. Of course, the ultimate goal would be to show that for any pair of graphs $G$ and $H$, $\gamma_P(G\Box H) \ge \gamma_P(G)\gamma_P(H)$.

\end{document}